\newtheorem{theorem}{Theorem}[section]
\newtheorem{proposition}{Proposition}[section]
\newtheorem{corollary}{Corollary}[section]
\newtheorem{lemma}{Lemma}[section]
\numberwithin{equation}{section}
\def\R{\mathbb{R}}
\def\T{\mathbb{T}}
\def\P{\mathbf{P}}
\def\I{\mathbf{I}}
\def\v{\varepsilon}
\def\d{\textup{d}}
\def\l{\langle}
\def\r{\rangle}
\def\up{\textup}
\def\n{\nabla}
\def\p{\partial}
\def\a{\alpha}
\def\b{\beta}
\def\D{\mathcal{D}}
\def\c{\cdot}
\def\E{\mathcal{E}}
\def\M{\sqrt{M}}
\newcounter{wronumber}\setcounter{wronumber}{1}
\begin{document}

\title[Hydrodynamic limit of VPFP system in low-field regime]
{Hydrodynamic limit of the Vlasov-Poisson-Fokker-Planck system in low-field regime}

\author[Zhendong Fang]{Zhendong Fang}
\address[Zhendong Fang]
        {\newline School of Mathematics and Information Science, Guangzhou University, Guangzhou 510006, P. R. China}
\email{zdfang@gzhu.edu.cn}

\author[Kunlun Qi]{Kunlun Qi}
\address[Kunlun Qi]
        {\newline Simons Laufer Mathematical Sciences Institute (former MSRI), Berkeley, CA 94720, USA
        \newline Department of Computational Mathematics, Science and Engineering and Department of Mathematics, Michigan State University, East Lansing, MI 48824, USA }
\email{kunlunqi.math@gmail.com}

\keywords{Hydrodynamic limit, Vlasov-Poisson-Fokker-Plank equation, Drift-Diffusion-Poisson equation, Energy estimate, Macro-Micro decomposition.}

\subjclass[2020]{Primary 35Q99; 35B25; 35Q30; 35B40. Second: 82C40; 76N10.}


\begin{abstract}
In this paper, we study the hydrodynamic limit of the scaled Vlasov–Poisson–Fokker–Planck (VPFP) system in the low-field regime. By employing the moment method, we formally derive the corresponding Drift–Diffusion–Poisson (DDP) system. Furthermore, we rigorously justify the pointwise convergence from the VPFP system to the DDP system through delicate high-order energy estimates based on a Macro–Micro decomposition. The main difficulty lies in controlling the nonlinear coupling between the kinetic and electrostatic fields and establishing uniform bounds with respect to the scaling parameter. These challenges are overcome by developing refined high-order energy methods that yield uniform energy estimates and ensure the global well-posedness of smooth solutions, without relying on any a \textit{priori} assumptions for the limiting DDP system.
\end{abstract}

\maketitle


\phantomsection
\addcontentsline{toc}{section}{\contentsname}



\section{Introduction}
\label{sec:intro}

\subsection{The model}
\label{subsec:bg}

We consider the hydrodynamic limit of a scaled Vlasov–Poisson–Fokker–Planck (VPFP) system, originally proposed in \cite{BostanGoudon2008}, which describes the collective behavior of a large number of charged particles under the combined effects of self-consistent electrostatic interactions and diffusion. The scaled VPFP system takes the form:
\begin{equation}\label{eq:Sclaed-VPFP-0}
\left\{
\begin{aligned}
&\p_t f^\v+\frac{1}{\v}v\c\n_x f^\v-\frac{1}{\v}\n_x\phi^\v\c\n_v f^\v=\frac{1}{\v^2}\up{div}_v(\n_v f^\v+v f^\v),\\[5pt]
&-\Delta_x\phi^\v=\rho^\v-1, \quad \rho^\v = \int_{\R^3}f^\v \, \d v\\[5pt]
&f^\v(t=0,x,v) = f^{\v,in}(x,v),\\[5pt]
\end{aligned}
\right.
\end{equation}
where $f^\v :=f^\v(t, x, v)$ is the distribution function of charged particles at time $t \geq 0$ in position $x\in\T^3$ with velocity $v\in\R^3$, $\rho^\v:= \rho^\v(t,x)$ denotes the macroscopic electron density,
and $\phi^\v:=\phi^\v(t,x)$ is the self-consistent electrostatic potential determined by the Poisson equation with the constant background charge density normalized to one for simplicity.
In addition, here $\v$ is a dimensionless parameter related to the mean free path, for which we refer the readers to \cite{BostanGoudon2008} for more physical intuition. 

In this paper, we rigorously justify the diffusion limit of the scaled VPFP system \eqref{eq:Sclaed-VPFP-0} in the sense that: as $\v \to 0$, the solution of the scaled VPFP system \eqref{eq:Sclaed-VPFP-0} converges to the solution of the following macroscopic Drift-Diffusion-Poisson (DDP) system:
\begin{equation}\label{The-Drift-Diffusion-Possion-system}
\left\{
\begin{aligned}
&\p_t\rho=\Delta_x\rho+\up{div}_x(\rho\n_x\phi),\\[4pt]
&-\Delta_x\phi=\rho-1,\\[4pt]
&\rho(0,x)=\rho^{in}(x).\\[4pt]
\end{aligned}
\right.
\end{equation}
The DDP system \eqref{The-Drift-Diffusion-Possion-system} provides a macroscopic description of charge transport, where the evolution of the density $\rho$ is governed by the combined effects of diffusion and drift under the self-consistent electrostatic potential $\phi$.

\subsection{Previous results and our contributions}
\label{subsec:previous}

In view of its fundamental physical importance, the VPFP system has been extensively investigated for a long history. We begin by reviewing the existing literature on its well-posedness and hydrodynamic limits, followed by a discussion that emphasizes our main contributions and novelties of this work through comparison with previous results.\\[-10pt]

\textit{Previous results for ``well-posedness"}: 

The well-posedness theory of the VPFP system and related models has been studied over the past decades. In \cite{Degond1986}, Degond established the global existence of smooth solutions in one and two dimensions and the local existence in three dimensions for the Vlasov–Fokker–Planck equation. Later, Victory-O'Dwyer \cite{Victory-Harold-Dwyer1990} proved the global existence of classical solutions for arbitrary initial data when the spatial or momentum dimension is less than or equal to two, and obtained local existence results for arbitrary data in higher dimensions. Furthermore, Bouchut \cite{Bouchut1993} conducted a detailed analysis of the regularity properties of solutions to the linear Vlasov–Fokker–Planck equation with a force field, and established the existence and uniqueness of smooth solutions to the three-dimensional VPFP system. The smoothing effect for the nonlinear three-dimensional VPFP system was subsequently investigated in \cite{Bouchut1995}.
In addition, Carpio \cite{Carpio1998} studied the long-time behavior of solutions to the VPFP system with sufficiently small initial data under suitable integrability assumptions. Hwang-Jang \cite{Hwang-Jang2013} later proved the exponential decay in time of small-amplitude smooth solutions near a global Maxwellian equilibrium, both in the whole space and in the periodic domain, by employing uniform-in-time energy estimates. More recently, Tan-Fan \cite{Tan-Fan2024} established the global-in-time existence of mild solutions to the VPFP system near a global Maxwellian equilibrium, again under small-amplitude initial perturbations.
In addition to the Fokker–Planck framework, it is worth mentioning the well-posedness theory for the Vlasov–Poisson equation coupled with the Boltzmann (VPB) and Landau (VPL) collision operators. For the VPB system, Guo \cite{Guo2002} established the unique global-in-time classical solution by developing an energy method incorporating a new dissipation estimate for the collision term. Later, Yang-Zhao \cite{Yang-Zhao2006} obtained global classical solutions for small initial perturbations by combining the theory of compressible Navier–Stokes equations with a refined Macro–Micro decomposition.
For the VPL system, Guo \cite{Guo2012} proved the existence of unique global solutions near Maxwellians via nonlinear energy methods and derived decay estimates through a bootstrap argument. More recently, Dong-Guo-Ouyang \cite{Dong-Guo-Ouyang2022} established global stability and well-posedness near Maxwellians with time decay, introducing new regularity estimates and an improved 
$L^2$ to $L^\infty$ energy framework for the VPL system under specular reflection boundary conditions.
For additional well-posedness results on Vlasov–Poisson equations coupled with other kinetic models, we refer the reader to \cite{Dong-Guo-Ouyang-Yastrzhembskiy2024, Dong-Yang-Zhong2019, Dong-Yastrzhembskiy2024, Duan-Yu2020, Jiang-Lei-Zhao2024, Li-Yang-Zhong2016, Xiao-Xiong-Zhao2017, Yang-Yu-Zhao2006}
For studies concerning the Vlasov–Fokker–Planck equation coupled with other physical models, we refer the reader to \cite{CJADRJMA11, CMKKLJ11, Dong-Yastrzhembskiy2022, GHMAZP10,  LFCMYMWDH17, LHLLSQYT22, MAVA07, MYMWDH20, YFSLY20} and the references therein.\\[-10pt]

\textit{Previous results for ``hydrodynamical limits"}: 

Based on the well-posedness, the hydrodynamic limit of the VPFP system has also been investigated from several perspectives. 
Early works centered on establishing weak convergence: Poupaud-Soler \cite{Poupaud-Soler2000} studied the parabolic limit, proving a weak $L^1$ convergence result, along with the stability of its solutions. This was subsequently extended by El Ghani-Masmoudi \cite{El-Masmoudi2010} to general spatial dimensions $d \geq 2$. Goudon \cite{G2005} also established global-in-time convergence in a weak $L^1$ framework for the 2D system with general initial data. We refer the more results about the hydrodynamical limit in the weak sense to \cite{GoudonNietoPoupaudSoler2005,NPS2001,WLL2015}.
More recently, attention has shifted to strong convergence and convergence near equilibrium: Zhong \cite{Zhong2022} proved the convergence of strong global solutions near the global Maxwellian by spectral analysis, establishing the optimal convergence rate and providing precise estimates for the initial layer. Blaustein \cite{B2023} established a strong convergence result for the diffusive limit in a low-regularity $L^p$ setting (for sufficiently large $p$). In the one-dimensional case, Lehman-Negulescu \cite{Lehman-Negulescu2025} also recently demonstrated strong $L^2$ convergence for the asymptotic limit.
For additional developments concerning the hydrodynamic limit of the VPFP system and related kinetic models, we refer the reader to \cite{Addala-Dolbeault-Li-Tayeb2021,ACGS2001, El2010, FangQiWen2024, Herda-Rodrigues2018} and the references therein.\\[-10pt]

\textit{Mathematical challenges and our contributions}:  

In contrast to previous works on strong convergence -- such as the $L^p$ framework in \cite{B2023, Lehman-Negulescu2025} and the $H^2_x L^2_v$ framework in \cite{Zhong2022} -- the present paper provides a rigorous justification of the hydrodynamic limit from the VPFP system \eqref{The-VPFP-system} to the DDP system \eqref{The-Drift-Diffusion-Possion-system-2} in a stronger pointwise sense with respect to both velocity and spatial variables (see Theorem \ref{Limit-Fluid-equations}).
The key novelty of our approach lies in the fact that we do not assume the existence of solutions to the limiting DDP system a \textit{priori}. Instead, we derive uniform-in-$\v$ estimates directly for the scaled VPFP system \eqref{eq:Sclaed-VPFP-0} (see Theorem \ref{Global-in-time-solution-of-VPFP}). Through a compactness argument, we obtain the DDP system as the limiting dynamics of the VPFP solutions, thereby establishing its existence simultaneously with the limit process.


To this end, the main analytical challenge arises from obtaining uniform high-order energy estimates for the scaled VPFP system \eqref{The-VPFP-system}. In the classical energy method, it is essential to prove global well-posedness of the DDP system \eqref{The-Drift-Diffusion-Possion-system-2}, and this typically relies on delicate higher-order estimates for the DDP equations themselves, which is highly nontrivial. In this work, we circumvent the need for any a \textit{priori} estimates on the limiting system by constructing and controlling intricate higher-order energy functionals for the VPFP system in the kinetic regime. These uniform estimates not only yield the hydrodynamic limit but also ensure the global well-posedness of the DDP system as a direct consequence of the limiting process.


Meanwhile, to establish the pointwise convergence, we design refined energy–dissipation structures (see \eqref{Def_total_functional}) based on the classical Macro–Micro decomposition, which aligns naturally with our perturbation form \eqref{def:perturbation-form}. These functionals incorporate high regularity in both the spatial and velocity variables, allowing the pointwise convergence to directly follow from the total higher-order energy estimate (see Proposition \ref{total_energy_dissipation}) via standard embedding arguments (see Corollary \ref{Conergence}). 
To the best of our knowledge, this work provides the first rigorous justification of the hydrodynamic limit from the scaled VPFP system to the DDP system in a strong pointwise sense. This result relies on a delicate and technically demanding hierarchy of high-order uniform energy estimates. Beyond the VPFP setting, the framework developed here has the potential to be extended to the hydrodynamic limits of other coupled kinetic–field systems, such as the Vlasov–Maxwell–Fokker–Planck equations, thereby offering a unified approach for treating hydrodynamic limits in more complex models.




\subsection{Organization of the paper}
\label{subsec:organization}
This paper is organized as follows: The main results are first presented in the following Section \ref{sec:main}. In Section \ref{sec:formal}, the formal derivation is shown by the moment method. We then develop the global-in-time energy estimate of the scaled VPFP system in Section \ref{sec:energy}. Based on the delicate energy estimate above, the strong pointwise convergence is finally justified in Section \ref{sec:limit}. 

\section{Notations and main results}
\label{sec:main}

\subsection{Notations}
\label{subsec:notation}
Before stating the main result, we give some notations in this paper.
\begin{itemize}
\item $A \lesssim B \Leftrightarrow A \leq C B$, $A \sim B\Leftrightarrow C_1 A \leq B \leq C_2 A$, for some generic constants $C,C_1,C_2>0$ independent of $t$ and $\v$.

\item For multi-indices $\alpha = (\alpha_1, \alpha_2, \alpha_3)$ and $\beta = (\beta_1, \beta_2, \beta_3)$, we denote
\begin{equation*}
    \partial_x^\alpha = \partial_{x_1}^{\alpha_1} \partial_{x_2}^{\alpha_2} \partial_{x_3}^{\alpha_3}, \quad \partial_v^\beta = \partial_{v_1}^{\beta_1} \partial_{v_2}^{\beta_2} \partial_{v_3}^{\beta_3}.
\end{equation*}

\item For the multi-indices $\alpha$ and $\alpha'$, we denote $\binom{\a}{\a'}$ as  the binomial coefficient.

\item Let $\nu=1+|v|^2$ and we denote $\|\c\|_{\nu}$ by
\begin{equation*}
\|g\|_{\nu}=\left( \int_{\T^3}\int_{\R^3} |\n_v g|^2+|g|^2\nu \, \d v \, \d x \right)^{\frac{1}{2}}.
\end{equation*}

\item For $d,e\in\mathbb{N}$, we denote the following inner-products with associated norms:
\begin{equation*}
\begin{aligned}
\l u,w\r_x =& \int_{\T^3} uw \,\d x,\,  \quad  \l f,g\r_v=\int_{\R^3} fg \,\d v,\, \quad \|u\|_{L^2_x}= \l u,u \r_x^{\frac{1}{2}},\, \quad \|f\|_{L^2_v}= \l f,f\r_v^{\frac{1}{2}}, \\[4pt]
\l f,g \r_{x,v} =& \int_{\T^3} \int_{\R^3} fg \,\d v \,\d x, \quad \|f\|_{L^2_{x,v}}=\l f,f\r_{x,v}^{\frac{1}{2}},\\
\end{aligned}
\end{equation*}
and the function spaces:
\begin{equation*}
\begin{aligned}
H^d_{x}:=&\big\{u(x) \ \big| \ \|\p_x^\alpha u\|_{L^2_x} <\infty,\,\up{for any}\,|\a|\leq d\big\},\\[6pt]
H^d_{x,v}:=&\big\{f(x,v) \ \big| \ \|\p_x^\alpha\p_v^\beta f\|_{L^2_{x,v}} <\infty,\,\up{for any}\,|\a|+|\b|\leq d\big\},\\[6pt]
\mathcal{H}^d_{x,v}:=&\big\{f(x,v) \ \big| \ \|\p_x^\alpha\p_v^{\beta} f\|_{\nu} <\infty,\,\up{for any}\,|\a|+|\b|\leq d\big\},\\[6pt]
H^d_x H^e_v:=&\big\{f(x,v) \ \big| \ \|\p_x^\alpha\p_v^\beta f\|_{L^2_{x,v}} <\infty,\,\up{for any}\,|\a|\leq d,\,|\b|\leq e\big\},\\[6pt]
\mathcal{H}^d_x\mathcal{L}^2_v : = &\big\{f(x,v) \ \big| \ \|\p_x^\alpha  f\|_{\nu} <\infty,\,\up{for any}\,|\a|\leq d\big\}\\[6pt]
\mathcal{H}^d_x \mathcal{H}^e_v:=&\big\{f(x,v) \ \big| \ \|\p_x^\alpha\p_v^\beta f\|_{\nu} <\infty,\,\up{for any}\,|\a|\leq d,\,|\b|\leq e\big\}.
\end{aligned}
\end{equation*}
\item We denote $M$ as a global normalized Maxwellian equilibrium given by
\begin{equation}\label{Maxwellian}
M:=M(v)=\frac{1}{(2\pi)^{\frac{3}{2}}}\up{e}^{-\frac{|v|^2}{2}}.
\end{equation}

\end{itemize}

\subsection{Main results}
\label{subsec:main}

We consider the solution around the global Maxwellian distribution $M$ in the sense that
\begin{equation}\label{def:perturbation-form}
    f^\v(t,x,v) = M + g^\v(t,x,v) \sqrt{M},
\end{equation}
by substituting \eqref{def:perturbation-form} into \eqref{eq:Sclaed-VPFP-0}, we obtain the scaled VPFP system for $(g^\v,\n_x\phi^\v)$: 
\begin{equation}\label{The-VPFP-system}
\left\{
\begin{aligned}
&\p_t g^\v+\frac{1}{\v}v\c\n_x g^\v+\frac{1}{\v}v\c\n_x\phi^\v\M+\frac{1}{\v}
\left(\frac{g^\v}{2}v\c\n_x\phi^\v-\n_v g^\v\c\n_x\phi^\v\right)+\frac{1}{\v^2}Lg^\v=0,\\[4pt]
&-\Delta_x\phi^\v=a^\v,\\[4pt]
& g^\v(0,x,v)=g^{\v,in}(x,v),
\end{aligned}
\right.
\end{equation}
where $L$ is the Fokker-Planck operator
\begin{equation}\label{def-L}
  L g^\v:=-\frac{1}{\M}\up{div}_v\left(M\n_v \left(\frac{g^\v}{\M} \right)\right),  
\end{equation}
and $a^\v$ is given by
\begin{equation}\label{The-def-a}
    a^\v: = a^\v(t,x)=\int_{\R^3}g^\v\M\d v.
\end{equation}

In the following Theorem \ref{Global-in-time-solution-of-VPFP}, we present the global well-posedness of the scaled VPFP system \eqref{The-VPFP-system} with the corresponding total energy estimate.

\begin{theorem}\label{Global-in-time-solution-of-VPFP}
For any integer $k\geq3$, there exists a small constant $\delta_0>0$ such that, if $\mathbb{E}_k(0)\leq\delta_0$, then the scaled VPFP system \eqref{The-VPFP-system} admits a unique solution $(g^\v,\n_x\phi^\v)$ satisfying
\begin{equation}\label{The-funcutions-of-space}
\begin{aligned}
  &g^\v (t,x,v) \in L^\infty \left(0,+\infty;H^k_{x,v}\right),\quad (\I-\P_0)g^\v(t,x,v) \in L^2\left(0,+\infty;\mathcal{H}^k_{x,v}\right),\\[5pt]
  &\n_x\phi^\v(t,x) \in L^\infty\left(0,+\infty;H^k_x\right)\cap L^2\left(0,+\infty;H^k_x\right)
\end{aligned}
\end{equation}
with uniform energy estimate
\begin{equation}\label{Uniform energy estimate}
\sup_{t\geq 0}\mathbb{E}_k(t) + \tilde{C} \int_0^{+\infty}\mathbb{D}_k(\tau) \, \d\tau \lesssim \mathbb{E}_k(0),
\end{equation}
where the energy functional $\mathbb{E}_k$ and dissipation functional $\mathbb{D}_k$ are defined in \eqref{Def_total_functional} and $\tilde{C}$ is independent of $\v$.
\end{theorem}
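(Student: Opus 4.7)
The plan is to establish global well-posedness and the uniform estimate \eqref{Uniform energy estimate} via a high-order energy method built on the Macro--Micro decomposition together with a continuity argument. As preparation I would record three structural ingredients: the coercivity $\l Lg,g\r_v \gtrsim \|(\I-\P_0)g\|_\nu^2$ of the Fokker--Planck operator on the orthogonal complement of its one-dimensional kernel $\up{span}\{\M\}$; the elliptic gain $\|\phi^\v\|_{H^{k+2}_x}\lesssim \|a^\v\|_{H^k_x}$ inherited from the Poisson equation on the mean-zero part of $a^\v$; and the macroscopic moment identity obtained by testing the perturbation equation against $\M$ in $v$, namely $\p_t a^\v + \frac{1}{\v}\n_x\c b^\v = (\up{nonlinear})$ with $b^\v := \l v\M,(\I-\P_0)g^\v\r_v$.

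\textbf{Step 1 (basic energy with singular cancellation).} Testing the equation for $g^\v$ against $g^\v$ in $L^2_{x,v}$, the free transport term $\frac{1}{\v}v\c\n_x g^\v$ vanishes by integration by parts in $x$. The singular electrostatic term $\frac{1}{\v}\l v\c\n_x\phi^\v\M, g^\v\r_{x,v}$ reduces to $\frac{1}{\v}\l\n_x\phi^\v, b^\v\r_x$, which by the macroscopic identity and $-\Delta_x\phi^\v=a^\v$ rearranges exactly into $\tfrac12\tfrac{d}{dt}\|\n_x\phi^\v\|_{L^2_x}^2$ plus cubic remainders. This produces
\[
\tfrac12\tfrac{d}{dt}\bigl(\|g^\v\|_{L^2_{x,v}}^2 + \|\n_x\phi^\v\|_{L^2_x}^2\bigr) + \tfrac{1}{\v^2}\|(\I-\P_0)g^\v\|_\nu^2 \lesssim \up{nonlinear},
\]
indicating that the natural energy $\mathbb{E}_k$ must incorporate both $g^\v$ and $\n_x\phi^\v$.

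\textbf{Step 2 (high-order estimates and macro dissipation).} I would apply $\p_x^\a\p_v^\b$ for $|\a|+|\b|\leq k$ and iterate Step 1; the commutator $[\p_v^\b,v\c\n_x]$ produces only lower-order terms (same total order $\leq k$) that are readily absorbed, while the cancellation from Step 1 persists for every pure $\p_x^\a$. This still yields only micro dissipation, so to control $\|\n_x a^\v\|_{H^{k-1}_x}$ -- and thereby $\|\n_x\phi^\v\|_{H^k_x}$ via Poisson -- I would introduce an auxiliary cross functional obtained by testing the micro equation $\frac{1}{\v^2}L(\I-\P_0)g^\v = -\p_t g^\v - \frac{1}{\v}v\c\n_x g^\v - \frac{1}{\v}v\c\n_x\phi^\v\M - (\up{nonlinear})$ against $v\M\c\n_x a^\v$-type test functions. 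Using the identity $L(v\M)=v\M$, the dominant linear contribution delivers $\|\n_x(a^\v+\phi^\v)\|^2_{H^{k-1}_x}$; combined with $\|\n_x\phi^\v\|_{L^2_x}^2$ from Step 1 and Poincar\'e on the zero-mean quantity $a^\v$, this supplies the required macroscopic piece of $\mathbb{D}_k$.

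\textbf{Step 3 (nonlinear closure and bootstrap).} For the nonlinear $\frac{1}{\v}$ contributions $\frac{1}{2\v}g^\v v\c\n_x\phi^\v$ and $\frac{1}{\v}\n_v g^\v\c\n_x\phi^\v$ appearing in \eqref{The-VPFP-system}, I would split $g^\v$ into macro and micro parts and observe that the macro--macro interaction vanishes by $\int vM\,\d v=0$; the surviving macro--micro and micro--micro pieces are absorbed via Cauchy--Schwarz into the micro dissipation $\frac{1}{\v^2}\|(\I-\P_0)g^\v\|_\nu^2$ after estimating the smooth factor through the Sobolev embedding $H^k_x\hookrightarrow L^\infty_x$ (valid for $k\geq 3$ on $\T^3$), producing bounds of the form $\sqrt{\mathbb{E}_k}\,\mathbb{D}_k$. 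Assembling Steps 1--2 with appropriately small weights gives the differential inequality $\tfrac{d}{dt}\mathbb{E}_k + \tilde C\,\mathbb{D}_k \leq C\sqrt{\mathbb{E}_k}\,\mathbb{D}_k$, so a standard continuity/bootstrap argument using $\mathbb{E}_k(0)\leq\delta_0$ closes the estimate and yields \eqref{Uniform energy estimate} globally in time. The chief obstacle is the macro-dissipation construction in Step 2: extracting uniform-in-$\v$ dissipation for $a^\v$ and $\n_x\phi^\v$ -- which are singularly coupled at order $1/\v$ and further tied by Poisson -- without appealing to any a priori estimate on the limiting DDP system.
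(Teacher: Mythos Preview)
Your proposal follows essentially the same strategy as the paper: derive the differential inequality $\tfrac{d}{dt}\E_k + \tilde C\,\D_k \lesssim \sqrt{\E_k}\,\D_k$ by combining Fokker--Planck coercivity (micro dissipation), a cross-functional (macro dissipation), and Macro--Micro splitting of the nonlinear terms, then close via continuity. The Step~1 cancellation $\frac{1}{\v}\langle v\cdot\nabla_x\phi^\v\sqrt{M},g^\v\rangle \to \tfrac12\tfrac{d}{dt}\|\nabla_x\phi^\v\|^2$ and the Step~3 bootstrap match the paper's Lemma~4.1 and \S4.6 almost verbatim.

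Two tactical points deserve care. First, the paper uses the finer projection $\P=\P_0\oplus\P_1$ and applies mixed derivatives $\partial_x^\alpha\partial_v^\beta$ to the equation for $(\I-\P)g^\v$ rather than to $g^\v$; this kills the singular source $\frac{1}{\v}v\cdot\nabla_x\phi^\v\sqrt{M}$ (since $(\I-\P)(v\sqrt{M})=0$) and avoids an extra linear term your approach would produce. Second, and more importantly, your Step~2 cross-test as stated is off by a factor of $\v$: testing the micro equation in the form $\frac{1}{\v^2}L(\I-\P_0)g^\v=\cdots$ against $v\sqrt{M}\cdot\nabla_x a^\v$ yields $\frac{1}{\v^2}\langle b^\v,\nabla_x a^\v\rangle_x$ on the left, which after Young's inequality gives $\frac{C}{\v^3}\|b^\v\|^2$ and is \emph{not} uniformly absorbable. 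The paper instead tests the momentum moment equation $\eqref{Pg-a-b}_2$ (equivalently, $\v\times$ your test) against $\nabla_x a^\v$, obtaining $\|\nabla_x a^\v\|^2+\|\nabla_x^2\phi^\v\|^2$ at $O(1)$ with the remaining $\frac{1}{\v}\langle b^\v,\nabla_x a^\v\rangle$ now controllable by $\D_{k,K,1}$; it also introduces a second cross-functional via the $\Gamma_{ij}$ equation $\eqref{Pg-a-b}_3$ to produce $\frac{1}{\v}\|\nabla_x b^\v\|^2$ and absorb a residual $\|\mathrm{div}_x b^\v\|^2$. Once this scaling is fixed, your outline is correct.
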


Based on the well-posedness and uniform energy estimate in Theorem \ref{Global-in-time-solution-of-VPFP} above, we can rigorously justify the hydrodynamical limit from the scaled VPFP system to the DDP system in Theorem \ref{Limit-Fluid-equations}. More specifically, as $\v \to 0$, the solution $(g^\v, \nabla_x\phi^\v)$ to \eqref{The-VPFP-system} can be shown to converge to $(\rho_0 \M, \nabla_x\phi_0)$, which are the solutions to the following DDP system \eqref{The-Drift-Diffusion-Possion-system-2}:
\begin{equation}\label{The-Drift-Diffusion-Possion-system-2}
\left\{
\begin{aligned}
&\p_t\rho_0=\Delta_x\rho_0+\up{div}_x\big[(\rho_0+1)\n_x\phi_0\big],\\[4pt]
&-\Delta_x\phi_0=\rho_0,\\[4pt]
\end{aligned}
\right.
\end{equation}
where we denote $\rho_0:=\rho-1$ and $\phi_0:=\phi$ with $\rho,\phi$ being the solutions to the original DPP system \eqref{The-Drift-Diffusion-Possion-system}.

\begin{theorem}\label{Limit-Fluid-equations}
Under the conditions of Theorem \ref{Global-in-time-solution-of-VPFP}, let $(g^{\v,in},\n_x\phi^{\v,in})$ be the initial condition satisfying
\begin{equation}\label{The-initial-datas}
    \begin{aligned}
     &g^{\v,in}(x,v)\to\,\rho_0^{in}(x) \M, \quad\up{strongly in $H^k_{x,v}$},\\[5pt]
     &\n_x\phi^{\v,in}(x)\to\,\n_x\phi_0^{in}(x),\quad\up{strongly in $H^k_x$},
    \end{aligned}
\end{equation}
as $\v\to\,0$, and $(g^\v,\n_x\phi^\v)$ be a sequence of solutions to the scaled VPFP system \eqref{The-VPFP-system} obtained by Theorem \ref{Global-in-time-solution-of-VPFP}. Then, for any given $T>0$,
\begin{equation}\label{The-functions-datas}
    \begin{aligned}
    &g^\v(t,x,v)\to\,\rho_0(t,x)\M,\quad\up{weakly-$\star$ in $t\in[0,T]$, strongly in $H^{k-1}_x$, weakly in $H^k_v$},\\[5pt]
     &\n_x\phi^\v(t,x)\to\,\n_x\phi_0(t,x),\quad\up{weakly-$\star$ in $t\in[0,T]$, strongly in $H^{k-1}_x$},
    \end{aligned}
\end{equation}
as $\v\to\,0$, where 
\[
\rho_0\in L^\infty \left(0,T;H^k_x\right)\cap C\left([0,T];H^{k-1}_x\right),\quad \n_x\phi_0\in L^\infty\left(0,T;H^k_x\right)\cap C\left([0,T];H^k_x\right)
\]
is the unique solution to the DPP system \eqref{The-Drift-Diffusion-Possion-system-2} with the initial conditions $(\rho_0^{in}, \n_x\phi^{in}_0)$ given in \eqref{The-initial-datas}.

Furthermore, the convergence of the moments holds:
\begin{equation}
    \begin{aligned}
     &\l g^\v(t,x,\cdot),\M \r_v\to\,\rho_0(t,x),\quad\up{strongly in $C\left([0,T];H^{k-1}_x\right)$},\\[5pt]
     &\n_x\phi^\v(t,x) \to\,\n_x\phi_0(t,x),\quad\,\,\,\,\,\up{strongly in $C\left([0,T];H^{k-1}_x\right)$},   
    \end{aligned}
\end{equation}
as $\v\to\,0$.

\end{theorem}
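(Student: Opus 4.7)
The strategy is to leverage the uniform-in-$\v$ energy estimate of Theorem \ref{Global-in-time-solution-of-VPFP}, use the Macro--Micro decomposition to recognise that the weak limit is a Maxwellian fluctuation $\rho_0\M$, upgrade to strong convergence in $x$ via an Aubin--Lions compactness argument, and then pass to the limit in the moment equations of \eqref{The-VPFP-system} to arrive at the DDP system \eqref{The-Drift-Diffusion-Possion-system-2}.

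Since $\mathbb{E}_k(t)\lesssim \mathbb{E}_k(0)$ is uniform in $\v$, Banach--Alaoglu yields subsequences and limits with
\begin{equation*}
g^\v\rightharpoonup^\star g^0\ \text{in}\ L^\infty(0,T;H^k_{x,v}),\qquad \n_x\phi^\v\rightharpoonup^\star\n_x\phi^0\ \text{in}\ L^\infty(0,T;H^k_x).
\end{equation*}
The time-integrated dissipation controls $\v^{-2}\|(\I-\P_0)g^\v\|_{\mathcal{H}^k_{x,v}}^2$, hence $(\I-\P_0)g^\v=O(\v)$ in $L^2(0,T;\mathcal{H}^k_{x,v})$. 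Consequently $g^0\in\ker L$, so $g^0=\rho_0\M$ with $\rho_0\in L^\infty(0,T;H^k_x)$, and passing to the limit in the Poisson equation gives $-\Delta_x\phi^0=\rho_0$.

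To upgrade to strong convergence in $x$, I exploit the macroscopic moment structure. Testing \eqref{The-VPFP-system} against $\M$ and $v_i\M$ in $v$, using $\M\in\ker L$ together with the identity $L(v_i\M)=v_i\M$ and integration by parts on the electric-field nonlinearity, produces the mass balance and flux identity
\begin{equation*}
\p_t a^\v+\frac{1}{\v}\n_x\c J^\v=0,\qquad \frac{J^\v}{\v}=-\n_x a^\v-(1+a^\v)\n_x\phi^\v-\n_x\c\l v\otimes v\,(\I-\P_0)g^\v,\M\r_v-\v\,\p_t J^\v,
\end{equation*}
with $J^\v=\l v(\I-\P_0)g^\v,\M\r_v$. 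The dissipation estimate yields $\|J^\v/\v\|_{L^2(0,T;H^{k-1}_x)}=O(1)$, so $\p_t a^\v$ is bounded in $L^2(0,T;H^{k-2}_x)$; combined with $a^\v\in L^\infty(0,T;H^k_x)$ and the compact embedding $H^k_x\hookrightarrow H^{k-1}_x$, an Aubin--Lions--Simon argument delivers the strong convergence $a^\v\to\rho_0$ in $C([0,T];H^{k-1}_x)$. Strong convergence $\n_x\phi^\v\to\n_x\phi_0$ in the same space then follows from elliptic regularity applied to $-\Delta_x\phi^\v=a^\v$.

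Finally, these strong convergences pair with the weak convergence of $g^\v$ and the $O(\v)$ smallness of the micro part to allow passage to the limit in the flux identity, yielding $J^\v/\v\to -\n_x\rho_0-(1+\rho_0)\n_x\phi_0$. Substituting into the mass balance produces precisely \eqref{The-Drift-Diffusion-Possion-system-2}. The main obstacle will be controlling the weighted moment $\l v\otimes v\,(\I-\P_0)g^\v,\M\r_v$ at top-order regularity and ensuring that the nonlinear product $a^\v\n_x\phi^\v$ passes to the limit; this is handled by trading the $O(\v)$ smallness of the micro part in $L^2$ against the strong $C([0,T];H^{k-1}_x)$ convergence of the macro part, which requires $k\ge 3$ to furnish adequate Sobolev room. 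Uniqueness of smooth solutions to \eqref{The-Drift-Diffusion-Possion-system-2} in the identified regularity class -- obtained by a standard energy estimate on the difference of two solutions -- then upgrades subsequential convergence to the full convergence stated in the theorem.
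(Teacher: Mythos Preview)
Your proposal is correct and follows essentially the same route as the paper's proof in Section~\ref{sec:limit}: uniform bounds plus Banach--Alaoglu for weak-$\star$ limits, the $O(\v)$ dissipation bound on $(\I-\P_0)g^\v$ to force the Maxwellian form of the limit, the mass-balance moment equation together with Aubin--Lions for strong $C([0,T];H^{k-1}_x)$ convergence of $a^\v$, and passage to the limit in the flux identity. The only cosmetic differences are that the paper obtains your flux identity via the equivalent trick $\frac{1}{\v}\l(\I-\P_0)g^\v,v\M\r_v=\l\frac{1}{\v}Lg^\v,v\M\r_v$ (using $L(v\M)=v\M$ and self-adjointness, cf.\ \eqref{The-formally-limits-2} and \eqref{Def-R}) and runs a separate Aubin--Lions argument for $\n_x\phi^\v$ rather than invoking elliptic regularity as you do.
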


Thanks to Theorem \ref{Global-in-time-solution-of-VPFP}, the following Corollary \ref{Conergence} on pointwise convergence is directly obtained by the Sobolev embedding $H^2\hookrightarrow L^\infty$, and the complete proof is given in Section \ref{subsec:proof_main_corollary}.

\begin{corollary}\label{Conergence}
Under the conditions of Theorem \ref{Limit-Fluid-equations} with $k \geq 4$,  the following pointwise convergence holds: for any given $T>0$,
\begin{equation}
\begin{aligned}
\lim_{\v \rightarrow 0 }\int_0^T \left|f^\v(t,x,v)-\big[1+\rho_0(t,x) \big] M\right|^2 \,\d t & = 0,\\[4pt]
\lim_{\v \rightarrow 0 } \left|\n_x\phi^\v(t,x)-\n_x\phi_0(t,x) \right|&= 0,
\end{aligned}
\end{equation} 
for $(t,x,v)\in [0,T]\times\T^3\times\R^3$.
\end{corollary}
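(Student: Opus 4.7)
My plan is to reduce both pointwise limits to the strong convergence results already furnished by Theorem \ref{Limit-Fluid-equations}, via the three-dimensional Sobolev embedding $H^s \hookrightarrow L^\infty$ (valid for $s > 3/2$) applied separately to the position variable $x\in\T^3$ and the velocity variable $v\in\R^3$. The hypothesis $k \geq 4$ ensures $k-1 \geq 3 > 3/2$, so the embedding applies to every space I will use.

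First I would dispatch the electrostatic potential. Theorem \ref{Limit-Fluid-equations} provides $\n_x\phi^\v \to \n_x\phi_0$ strongly in $C([0,T]; H^{k-1}_x)$, and the embedding $H^{k-1}(\T^3) \hookrightarrow L^\infty(\T^3)$ upgrades this to strong convergence in $C([0,T]; L^\infty(\T^3))$. The resulting uniform convergence on $[0,T]\times\T^3$ immediately yields the claimed pointwise limit $|\n_x\phi^\v(t,x)-\n_x\phi_0(t,x)| \to 0$.

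For the kinetic part, I would exploit the perturbation ansatz \eqref{def:perturbation-form} to rewrite
\begin{equation*}
f^\v(t,x,v) - [1+\rho_0(t,x)]M(v) = \sqrt{M(v)}\,\bigl(g^\v(t,x,v) - \rho_0(t,x)\sqrt{M(v)}\bigr),
\end{equation*}
so that, after squaring and integrating in time,
\begin{equation*}
\int_0^T \bigl|f^\v - (1+\rho_0)M\bigr|^2\,\d t = M(v)\int_0^T \bigl|g^\v - \rho_0\sqrt{M}\bigr|^2\,\d t,
\end{equation*}
and it suffices to show the right-hand integral tends to $0$ for every fixed $(x,v)$. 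Combining the uniform bound $g^\v \in L^\infty(0,T; H^k_{x,v})$ from Theorem \ref{Global-in-time-solution-of-VPFP} with the strong-in-$H^{k-1}_x$, weak-in-$H^k_v$ convergence provided by Theorem \ref{Limit-Fluid-equations}, an Aubin-Lions type interpolation promotes the weak $H^k_v$ convergence to strong convergence in $H^{k-1}_v$, producing strong convergence of $g^\v - \rho_0\sqrt{M}$ in $L^2(0,T; H^{k-1}_x H^{k-1}_v)$. Iterating the Sobolev embedding $H^{k-1}\hookrightarrow L^\infty$ in the two three-dimensional variables gives $H^{k-1}_x H^{k-1}_v \hookrightarrow L^\infty_{x,v}$, yielding convergence in $L^2(0,T; L^\infty_{x,v})$; evaluating at any fixed $(x,v)\in\T^3\times\R^3$ then delivers the pointwise $L^2_t$ limit.

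The main technical obstacle I anticipate is the upgrade from weak to strong convergence in $H^{k-1}_v$ on the unbounded domain $\R^3_v$: Rellich compactness there requires tightness at infinity, which is supplied by the $\nu = 1+|v|^2$ weighted control built into the dissipation functional $\mathbb{D}_k$ appearing in \eqref{Uniform energy estimate}, together with the microscopic smallness $\|(\I-\P_0)g^\v\|_{L^2_t\mathcal{H}^k_{x,v}} = O(\v)$ that the scaling $\v^{-2} L$ forces through the dissipation bound. Once this tightness is secured, the rest of the argument reduces to the routine chain of embeddings outlined above.
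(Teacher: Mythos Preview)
Your treatment of $\nabla_x\phi^\v$ is exactly the paper's argument. For the kinetic quantity, however, the paper does not go through any compactness or interpolation in the velocity variable. Instead it performs the Macro--Micro split
\[
g^\v - \rho_0\sqrt{M} \;=\; (a^\v-\rho_0)\sqrt{M} \;+\; (\I-\P_0)g^\v
\]
and handles the two pieces separately: the first has explicit $v$-dependence through $\sqrt{M}$, so only the strong convergence $a^\v\to\rho_0$ in $C([0,T];H^{k-1}_x)$ is needed, together with $H^2_x\hookrightarrow L^\infty_x$; the second is controlled directly by the dissipation estimate $\int_0^T\|(\I-\P_0)g^\v\|^2_{\mathcal H^k_{x,v}}\,\d t = O(\v^2)$, followed by $H^2_xH^2_v\hookrightarrow L^\infty_{x,v}$ (which is why $k\ge 4$ is required, so that $|\alpha|+|\beta|\le 4$ falls inside the mixed index range). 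No Rellich argument, no tightness, no Aubin--Lions step is used.

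Your route is not wrong in spirit, but it is circuitous and contains a couple of soft spots. First, the target space $H^{k-1}_xH^{k-1}_v$ you aim for is not reachable from the available uniform bound, which lives only in the mixed space $H^k_{x,v}$ ($|\alpha|+|\beta|\le k$); for $k\ge 3$ one has $2(k-1)>k$, so $H^k_{x,v}\not\hookrightarrow H^{k-1}_xH^{k-1}_v$. You only need $H^2_xH^2_v$ for the $L^\infty_{x,v}$ embedding, and that is within $H^k_{x,v}$ once $k\ge 4$. Second, ``Aubin--Lions type interpolation'' is not the mechanism for upgrading weak $H^k_v$ to strong $H^{k-1}_v$; Aubin--Lions trades time-derivative control for time compactness, not velocity compactness. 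What you really invoke is Rellich--Kondrachov in $v$ plus tightness, and you correctly locate the tightness in the $\nu$-weighted micro bound---but notice that this $O(\v)$ smallness of $(\I-\P_0)g^\v$ in $L^2_t\mathcal H^k_{x,v}$ is already the entire argument for the microscopic piece: it gives convergence to zero outright, without any compactness. Once you see that, the Macro--Micro split is the natural shortcut, and your compactness detour becomes unnecessary.
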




\section{Formal analysis via moment method}
\label{sec:formal}

In this section, we employ the moment method to perform a formal asymptotic derivation with two objectives: first, to obtain the corresponding macroscopic system through moment closure; and second, to clarify the analytical framework that serves as the foundation for the subsequent rigorous proof.

\textbf{Step 1}: We start with re-writing the scaled VPFP system \eqref{eq:Sclaed-VPFP-0} as follows:
\begin{equation}\label{eq:scaled-VPFP-1}
\left\{
\begin{aligned}
&\up{div}_v(\n_v f^\v+v f^\v)=\v^2\p_t f^\v+\v v\c\n_x f^\v-\v\n_x\phi^\v\c\n_v f^\v,\\[4pt]
&-\Delta_x\phi^\v=\rho^\v-1.\\[4pt]
\end{aligned}
\right.
\end{equation}

Suppose that
\begin{equation}\label{Formal-form}
\begin{aligned}
f^\v\to f_0,\quad \phi^\v\to \phi_0, \quad \text{as} \quad \v\to 0,
\end{aligned}
\end{equation}
when taking $\v\to 0$ in \eqref{eq:scaled-VPFP-1}, the right-hand side of $\eqref{eq:scaled-VPFP-1}_1$ vanishes, and it yields
\begin{equation*}
\left\{
\begin{aligned}
&\up{div}_v(\n_v f_0+v f_0)=\up{div}_v\left[M\n_v\left(\frac{f_0}{M}\right)\right]=0,\\[4pt]
&-\Delta_x\phi_0 = \int_{\R^3} f_0 \,\d v-1,\\[4pt]
\end{aligned}
\right.
\end{equation*}
 which further implies that 
\begin{equation}\label{eq:f0}
f_0(t,x,v)=\rho(t,x) M,    
\end{equation}
where $\rho(t,x)$ is the function to be determined, and $M$ is the Maxwellian distribution as in \eqref{Maxwellian}. 

\textbf{Step 2:} To further determine the macroscopic equation satisfied by $\rho(t,x)$, we need to rely on the properties of the self-adjoint Fokker–Planck operator $L$. Specifically, recalling \eqref{def:perturbation-form} and noting \eqref{Formal-form}-\eqref{eq:f0}, we have
\begin{equation}\label{eq:convergence-g-v}
    g^\v(t,x,v) = \frac{f^\v(t,x,v) - M}{\M} \to \big( \rho(t,x)-1 \big) \M, \quad  \text{as} \quad \v\to 0.
\end{equation}
Furthermore, by substituting \eqref{def:perturbation-form} into \eqref{eq:scaled-VPFP-1}, we have  
\begin{equation}\label{eq:scaled-VPFP-2}
\left\{
\begin{aligned}
&\p_t g^\v+\frac{1}{\v}v\c\n_x g^\v+\frac{1}{\v}v\c\n_x\phi^\v\M+\frac{1}{\v}\left(\frac{g^\v}{2}v\c\n_x\phi^\v-\n_v g^\v\c\n_x\phi^\v\right)+\frac{1}{\v^2}Lg^\v=0,\\[4pt]
&-\Delta_x\phi^\v=a^\v,
\end{aligned}
\right.
\end{equation}
where $L$ is the Fokker-Planck operator as in \eqref{def-L} and $a^\v$ is defined in \eqref{The-def-a}.

In addition, according to $\eqref{eq:scaled-VPFP-2}_2$, we can obtain the following Poincar$\acute{\up{e}}$ type inequality, 
\begin{equation}\label{The-Poinccare-inequality}
\|\P_0 g^\v\|_{L^2_x}=\|a^\v\|_{L^2_x}\lesssim \|\n_x a^\v\|_{L^2_x},
\end{equation}
since 
\begin{equation*}
    \int_{\T^3}a^\v  \, \d x= \int_{\T^3}-\Delta_x\phi^\v \, \d x=0.
\end{equation*}

\textbf{Step 3:} Multiplying $\eqref{eq:scaled-VPFP-2}_1$ by $\M$ and integrating with respect to $v$ over $\R^3$, we have,
\begin{equation}\label{The-formally-limits-1}
\p_t a^\v = - \frac{1}{\v}\up{div}_x\l g^\v, v\M\r_v, 
\end{equation}
and then by taking the limit $\v \rightarrow 0$, the left-hand-side above yields that, considering \eqref{The-def-a} and \eqref{eq:convergence-g-v},
\begin{equation}\label{eq:LHS}
    \text{LHS} = \lim\limits_{\v\to 0} \partial_t a^\v = \lim\limits_{\v\to 0}\partial_t \left( \int_{\R^3}g^\v\M\d v \right) = \partial_t \rho
\end{equation}

For the right-hand-side, by $\eqref{eq:scaled-VPFP-2}_1$ and \eqref{eq:convergence-g-v}, we have,
\begin{equation}\label{The-formally-limits-2}
\begin{aligned}
\text{RHS} =& \lim\limits_{\v\to 0}\frac{1}{\v}\up{div}_x\l g^\v, v\M\r_v\\
=&\lim\limits_{\v\to 0}\frac{1}{\v}\up{div}_x\l(\I-\P_0)g^\v, v\M\r_v\\
=&\lim\limits_{\v\to 0}\frac{1}{\v}\up{div}_v\l(\I-\P_0)g^\v, L(v\M)\r_v\\
=&\lim\limits_{\v\to 0}\up{div}_x\l\frac{1}{\v}L(\I-\P_0)g^\v, v\M\r_v\\[5pt]
=&\lim\limits_{\v\to 0} \up{div}_x \Big\l-\v\p_t g^\v-v\c\n_x g^\v-v\c\n_x\phi^\v\M+\n_v g^\v\c\n_x \phi-\frac{g^\v}{2}v\c\n_x\phi^\v, \, v\M \Big\r_v\\[5pt]
= &\up{div}_x \Big\l-v\c\n_x\big[(\rho-1)\M\big]-v\c\n_x\phi\M+\n_v\big[(\rho-1)\M\big]\c\n_x \phi -\frac{(\rho-1)\M}{2}v\c\n_x\phi, \, v\M \Big\r_v\\[5pt]
=&-\Delta_x\rho+\up{div}_x(\rho\n_x\phi),
\end{aligned}  
\end{equation}
where we use the fact $L(v\M)=v\M$ in the third equality above, and the self-adjoint property of $L$ is applied in the fourth equality.

Finally, by collecting \eqref{eq:LHS}, \eqref{eq:convergence-g-v}, \eqref{The-formally-limits-1} and \eqref{The-formally-limits-2}, we can obtain the limiting DDP system \eqref{The-Drift-Diffusion-Possion-system}.


\section{Energy estimate}
\label{sec:energy}

The essence of the proof for our main theorems is the global-in-time energy estimate \eqref{Uniform energy estimate}, which is uniform for $0 < \v \leq 1$. Our proof of \eqref{Uniform energy estimate} can be outlined as follows: we first present the local well-posedness of scaled VPFP system in Section \ref{subsec:remainder}, and the specific designs of corresponding energy and dissipation functionals are presented in Section \ref{subsec:energy_dissipation}; as the whole principle of the energy estimate is trying to find sufficient dissipative or decay structures to control the singularity terms, we have to capture such ``good" dissipative structure from both microscopic part and macroscopic part (obtained by the Micro-Macro decomposition) of the reminder system in Section \ref{subsec:remainder_kinetic} and \ref{subsec:remainder_fluid}, respectively; we finally summarize all the estimates and conclude the total energy estimate in a well-designed and closed manner in Section \ref{subsec:proof_total_energy}.

\subsection{Micro-Macro decomposition and local well-posedness of VPFP system}
\label{subsec:remainder}

By the Micro-Macro decomposition as in \cite{DFT10}, we decompose $g$ by its macroscopic part $\P g^\v$ and microscopic part $(\I-\P)g$:
\begin{equation}\label{MM}
g^\v=\P g^\v+(\I-\P)g^\v,
\end{equation}
where the projection $\P\up{:}\,L^2_v\to\up{Span}\{\sqrt{M},v_1\sqrt{M},v_2\sqrt{M},v_3\sqrt{M}\}$ is given by
\begin{equation}\label{def-L-P}
\begin{aligned}
&\P=\P_0\oplus \P_1,\, \quad \,\P_0 g^\v :=a^\v\M,\, \quad \,\P_1 g^\v:=v\c b^\v\M
\end{aligned}
\end{equation}
with
\begin{equation}\label{ab}
    a^\v = \int_{\R^3} g^\v \M \, \d v \quad \text{and} \quad b^\v = \int_{\R^3} g^\v v \M \, \d v.
\end{equation}

Furthermore, the Fokker-Planck operator $L$ in \eqref{def-L} can be decomposed by
\begin{equation}\label{Decop-L}
L g^\v=L(\I-\P)g^\v+\P_1g=L(\I-\P)g^\v+v\c b^\v\M.
\end{equation}
and note that $\I-\P_0$, $\I-\P$ is self-adjoint in $H^d_{x,v}$, i.e.,
\begin{equation}
\begin{aligned}
\l\p_x^\alpha(\I-\P_0)f,\p_x^\alpha g\r_{x,v} = \l\p_x^\alpha f,\p_x^\alpha(\I-\P_0)g\r_{x,v}, \quad
\l\p_x^\alpha(\I-\P)f,\p_x^\alpha g\r_{x,v} = &\l\p_x^\alpha f,\p_x^\alpha(\I-\P)g\r_{x,v},\\
\end{aligned}
\end{equation}
for any $\alpha$ and $f,g\in H^d_{x,v}$, and it is easy to verify that
\begin{equation}\label{I-P}
\begin{aligned}
(\I-\P_0)(\I-\P)=\I-\P,\quad (\I-\P_0)(\I-\P_0)=\I-\P_0,\quad (\I-\P)(\I-\P)=\I-\P.
\end{aligned}
\end{equation}

According to \cite{CJADRJMA11}, the operator $L$ enjoys the dissipative property, i.e., there exists a constant $C_0>0$ such that
\begin{equation}\label{L-dissipation}
C_0\|(\I-\P)g^\v\|_{\nu}^2+\|b^\v\|^2_{L^2_x}  \leq \l L g^\v, g^\v\r_{x,v}.
\end{equation}

Now we are in a position to present the local well-posedness of the VPFP system \eqref{The-VPFP-system} above.

\begin{proposition}\label{Local-in-time}
For any integer $k \geq 3$, there exists $T^*>0$ independent of $\v$, such that for any $t\in[0,T^*]$ and $\v \in (0,1]$, the VPFP system \eqref{The-VPFP-system} admits a unique solution $(g^\v,\n_x\phi^\v)$ satisfying
\begin{equation*}
\begin{aligned}
 &g^\v(t,x,v) \in L^\infty\left(0,T^*;H^k_{x,v}\right),\quad (\I-\P_0)g^\v(t,x,v) \in L^2\left(0,T^*;\mathcal{H}^k_{x,v}\right),\\[5pt]
 &\n_x\phi^\v(t,x) \in L^\infty\left(0,T^*;H^k_x \right) \cap L^2\left(0,T^*;H^k_x\right)
\end{aligned}   
\end{equation*}
with the energy estimate
\begin{equation}\label{The-total-energy-of-local-in-time-solution}
    \frac{1}{2}\sup_{t\in[0,T^*]}\E_k(t)+\tilde{C}\int_0^{T^*}\D_k(t) \,\d t \,\leq\, C\E_k(0),
\end{equation}
where the constants $C,\,\tilde{C}$ are given in Proposition \ref{total_energy_dissipation}.
\end{proposition}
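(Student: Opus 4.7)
The plan is to construct a solution by a linearization--iteration scheme and then invoke the a priori estimate of Proposition \ref{total_energy_dissipation} to control each iterate uniformly in the scaling parameter $\v$ and the iteration index. Since the global statement in Theorem \ref{Global-in-time-solution-of-VPFP} follows from the same differential inequality combined with a smallness hypothesis on the initial energy, the local-in-time version can be extracted from that inequality by a standard continuity argument on a short time interval depending only on $\E_k(0)$, without any smallness assumption.

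Concretely, starting from $g^{\v,0} := g^{\v,in}$ and $\n_x\phi^{\v,0} := \n_x\phi^{\v,in}$, I would define the iterates $(g^{\v,n+1},\n_x\phi^{\v,n+1})$ by the linear problem
\begin{equation*}
\p_t g^{\v,n+1} + \frac{1}{\v}\,v\c\n_x g^{\v,n+1} + \frac{1}{\v^2}L g^{\v,n+1} = -\frac{1}{\v}v\c\n_x\phi^{\v,n}\M - \frac{1}{\v}\Big(\frac{g^{\v,n}}{2}v\c\n_x\phi^{\v,n} - \n_v g^{\v,n}\c\n_x\phi^{\v,n}\Big),
\end{equation*}
coupled with $-\Delta_x\phi^{\v,n+1} = a^{\v,n+1}$ and the common initial datum $g^{\v,n+1}(0,x,v) = g^{\v,in}(x,v)$. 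For each fixed $\v > 0$ this is a linear hypoelliptic kinetic equation with the coercive Fokker-Planck dissipation \eqref{L-dissipation}, for which solvability in the class $L^\infty_t H^k_{x,v}\cap L^2_t \mathcal{H}^k_{x,v}$ is classical, together with the elliptic estimate for $\n_x\phi^{\v,n+1}$.

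The linear analogue of Proposition \ref{total_energy_dissipation} applied to $g^{\v,n+1}$ with nonlinear coefficients frozen at $g^{\v,n}$ yields an inequality of the schematic form
\begin{equation*}
\tfrac{d}{dt}\E_k\bigl(g^{\v,n+1}\bigr)(t) + \tilde{C}\,\D_k\bigl(g^{\v,n+1}\bigr)(t) \,\leq\, C\,Q\!\left(\sqrt{\E_k\bigl(g^{\v,n}\bigr)},\sqrt{\E_k\bigl(g^{\v,n+1}\bigr)}\right)\D_k\bigl(g^{\v,n+1}\bigr)(t),
\end{equation*}
where $Q$ is a polynomial vanishing at the origin and all constants are independent of $\v$. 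Under the inductive hypothesis $\sup_{[0,T^*]}\E_k(g^{\v,n}) \leq 2C\E_k(0)$, one picks $T^*>0$ small enough, depending only on $\E_k(0)$, so that the right-hand side can be absorbed by the dissipation; Gr\"onwall's inequality then closes the induction and supplies both the uniform sup-bound and the uniform control of $\int_0^{T^*}\D_k\,\d t$. Convergence is obtained by estimating the difference $g^{\v,n+1}-g^{\v,n}$ in the weaker space $H^{k-1}_{x,v}$, which satisfies a linear equation of the same type with source controlled by $g^{\v,n}-g^{\v,n-1}$; shrinking $T^*$ if necessary produces a genuine contraction. Passing $n \to \infty$ gives a solution in the claimed spaces, the estimate \eqref{The-total-energy-of-local-in-time-solution} survives by weak-$\star$ lower semi-continuity of norms, and uniqueness follows from the same difference estimate applied to two putative solutions.

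The main obstacle is guaranteeing the $\v$-uniformity of $T^*$ despite the singular coefficients $\v^{-1}$ and $\v^{-2}$ in \eqref{The-VPFP-system}, and this is exactly what is packaged into Proposition \ref{total_energy_dissipation}. Structurally, the Macro-Micro decomposition \eqref{MM} isolates the singular dissipation $\v^{-2}L$ on $(\I-\P)g^\v$, whose coercivity \eqref{L-dissipation} absorbs both the $\v^{-1}$ streaming term through the microscopic part and the nonlinear field-coupling contributions controlled by Sobolev embedding; the residual macroscopic evolution, recovered via moment relations as in the formal derivation, acquires its dissipation from the Poincar\'e-type bound \eqref{The-Poinccare-inequality} and from the $b^\v$ term in \eqref{L-dissipation}. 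Once this cancellation is taken as input from Proposition \ref{total_energy_dissipation}, the remaining ingredients are standard linear theory.
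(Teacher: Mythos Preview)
Your approach via a linearization--iteration scheme is precisely the ``standard fixed point argument'' the paper invokes for its proof; in fact the paper gives no further detail beyond referring to \cite{GHMAZP10,Hwang-Jang2013}, so your sketch is already more complete than the paper's own treatment.

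One caveat worth flagging: your assertion that the short-time bound follows from Proposition~\ref{total_energy_dissipation} ``without any smallness assumption,'' with $T^*$ chosen small so the right-hand side can be absorbed, is slightly optimistic as written. The differential inequality $\tfrac{1}{2}\tfrac{\d}{\d t}\E_k + \tilde C\,\D_k \le C\,\E_k^{1/2}\D_k$ is pointwise in time, and for large data the factor $C\,\E_k^{1/2}$ may exceed $\tilde C$; shrinking $T^*$ does not by itself make the right-hand side absorbable, since $\D_k$ is not directly bounded by $\E_k$. To run the iteration for general data one needs a cruder estimate (e.g.\ bounding the singular nonlinear terms by $\D_k$ plus a polynomial in $\E_k$) that yields an ODE-type growth bound on $\E_k$, or else one should simply work under the smallness hypothesis of Theorem~\ref{Global-in-time-solution-of-VPFP} from the outset. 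Since the paper only invokes Proposition~\ref{Local-in-time} in that small-data regime, and since the paper itself does not address this point either, it does not affect the downstream application.
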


\begin{proof}
The proof is based on the standard fixed point argument. We refer to \cite{GHMAZP10,Hwang-Jang2013} for more details.
\end{proof}

\subsection{Total energy estimate}
\label{subsec:energy_dissipation}

To better state the total energy estimate of the VPFP system, we first introduce the temporal energy and dissipation functionals of different parts:
\begin{itemize}

\item Energy and dissipation functionals: for any integer $k\geq 0$,
\begin{equation}\label{part-energy-functionals}
\begin{aligned}
\E_{k,K,1}(t) := &\| g^\v\|^2_{H^k_x L_v^2}+\|\n_x\phi^\v\|^2_{H^k_x},\\[4pt]
\E_{k,K,2}(t) :=& \sum_{|\a|+|\b|\leq k-1}C_{\a,\b}\|\p_x^\a\p_v^{\b+\b'}(\I-\P)g^\v\|^2_{L^2_{x,v}},\\[4pt]
\E_{k,F}(t):=&\|(a^\v,b^\v,\n_x\phi^\v)\|_{H^{k-1}_x}^2+2\sum_{|\a|=0}^{k-1}\sum_{i,j=1}^3\l\p_x^{\a}(\p_{x_i} b_j^\v+\p_{x_j} b_i^\v),\p_x^\a(\I-\P)g^\v(v_iv_j-1)\M\r_{x,v}\\
&+\v\sum_{|\a|=0}^{k-1}\l\p_x^{\a+\a'}a^\v,\p_x^\a b^\v\r_x,\\[4pt]
\D_{k,K,1}(t):=&\frac{1}{\v^2}\big(\|(\I-\P)g^\v\|_{\mathcal{H}^k_x\mathcal{L}^2_v}^2+\|b^\v\|_{H^k_x}^2\big),\\[4pt]
\D_{k,K,2}(t):=&\frac{1}{\v^2}\sum_{|\a|+|\b|\leq k-1}C_{\a,\b}\|\p_x^\a\p_v^{\b+\b'}(\I-\P)g^\v\|_{\nu}^2,\\[4pt]
\D_{k,F}(t):=&\frac{1}{\v}\|(\n_x b^\v,\up{div}_x b^\v)\|^2_{H^{k-1}_x}+\|\n_x a^\v\|^2_{H^{k-1}_x}+\|\n_x\phi^\v\|_{H^k_x}^2,\\[4pt]
\end{aligned}
\end{equation}
where $C_{\a,\b} > 0$ are constants and $|\a'|=|\b'|=1$.

\item Total energy functional $\E(t)$ and dissipation functional $\D(t)$: for any integer $k\geq 0$,
\begin{equation}\label{energy-dissipation-functional}
\begin{aligned}
\E_k(t) :=& \lambda_1\E_{k,K,1}(t)+\lambda_2\E_{k,K,2}(t)+\lambda_3\E_{k,F}(t),\\[5pt]
\D_k(t) :=& \D_{k,K,1}(t)+\D_{k,K,2}(t)+\D_{k,F}(t),
\end{aligned}
\end{equation}
where $\lambda_i > 0, 1\leq i\leq 3$ are the constants given in \eqref{lambda-constants}.
\end{itemize}

Now we are in a position to present the total energy estimate of the VPFP system \eqref{The-VPFP-system}.

\begin{proposition}\label{total_energy_dissipation}
For any integer $k\geq 3$, let $(g^\v,\n_x\phi^\v)$ be the solution to the VPFP system \eqref{The-VPFP-system}, there exist constants $C,\,\tilde{C} > 0$ independent of $\v$ and $t$ such that, for $t \geq 0$,
\begin{equation}\label{total_energy_estimate}
\begin{aligned}
\frac{1}{2}\frac{\d}{\d t}\E_k(t)+\tilde{C}\D_k(t) \leq C\E^{\frac{1}{2}}_k(t)\D_k(t),
\end{aligned}
\end{equation}
where the energy and dissipation functionals $\E_k(t)$ and $\D_k(t)$ are defined in \eqref{energy-dissipation-functional}.
\end{proposition}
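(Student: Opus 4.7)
The plan is to combine three families of estimates -- a pure spatial-derivative kinetic estimate producing $\mathbb{E}_{k,K,1}$/$\mathbb{D}_{k,K,1}$, a mixed spatial--velocity-derivative estimate on $(\mathbf{I}-\mathbf{P})g^\v$ producing $\mathbb{E}_{k,K,2}$/$\mathbb{D}_{k,K,2}$, and a macroscopic estimate obtained from the moment equations for $(a^\v,b^\v,\nabla_x\phi^\v)$ producing $\mathbb{E}_{k,F}$/$\mathbb{D}_{k,F}$ -- and then to linearly combine them with weights $\lambda_1,\lambda_2,\lambda_3$ chosen so that the non-dissipative cross terms are absorbed. I first apply $\partial_x^\alpha$ with $|\alpha|\le k$ to the $g^\v$-equation in \eqref{The-VPFP-system}, test against $\partial_x^\alpha g^\v$, and use the symmetry of $v\cdot\nabla_x$, the dissipative estimate \eqref{L-dissipation}, and the Poisson relation $-\Delta_x\phi^\v=a^\v$ to turn the linear field--kinetic coupling $\frac{1}{\v}\langle v\cdot\nabla_x\phi^\v\sqrt{M},g^\v\rangle$ into a time derivative of $\|\nabla_x\phi^\v\|_{H^k_x}^2$ (the identity $\langle v\sqrt{M},g^\v\rangle_v=b^\v$ and $\partial_t a^\v+\frac{1}{\v}\mathrm{div}_x b^\v=0$ are what convert this singular term into an exact derivative). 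This yields control of $\mathbb{E}_{k,K,1}$ together with $\mathbb{D}_{k,K,1}$ up to nonlinear remainders.

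Next, to close the hierarchy I need dissipation in $\partial_v^\beta$, because the nonlinear terms $\frac{1}{\v}(\frac{g^\v}{2}v\cdot\nabla_x\phi^\v-\nabla_v g^\v\cdot\nabla_x\phi^\v)$ carry an apparent $\v^{-1}$ singularity and a $\nabla_v g^\v$ factor. I therefore apply $\partial_x^\alpha\partial_v^{\beta+\beta'}$ with $|\alpha|+|\beta|\le k-1$ to the equation and test against $\partial_x^\alpha\partial_v^{\beta+\beta'}(\mathbf{I}-\mathbf{P})g^\v$. The commutator $[\partial_v^{\beta+\beta'},v\cdot\nabla_x]=\sum\partial_x$ produces spatial derivatives that are bounded by $\mathbb{D}_{k,K,1}$ after a Cauchy--Schwarz with small parameter, and the Fokker--Planck operator $L$ gives a positive weighted $\|\cdot\|_\nu^2$ contribution on $(\mathbf{I}-\mathbf{P})g^\v$; the combinatorial coefficients $C_{\alpha,\beta}$ in $\mathbb{E}_{k,K,2}$ are chosen inductively on $|\beta|$ so the induction step can absorb the commutator losses.

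The decisive step is the macroscopic estimate for $\mathbb{E}_{k,F}$ and $\mathbb{D}_{k,F}$: from the moment equations (testing $\eqref{The-VPFP-system}_1$ against $\sqrt{M}$, $v_i\sqrt{M}$, $(v_iv_j-1)\sqrt{M}$), $a^\v$ and $b^\v$ satisfy a damped Euler--Poisson-like local conservation system with source terms driven by $\partial_t,\partial_x(\mathbf{I}-\mathbf{P})g^\v$. Following the macro--micro framework of \cite{Yang-Zhao2006, DFT10}, I apply $\partial_x^\alpha$ with $|\alpha|\le k-1$, take the inner product with the symmetrized gradient $\partial_{x_i}b_j+\partial_{x_j}b_i$ paired against $(v_iv_j-1)\sqrt{M}$, and combine with the $\v\langle\partial_x^{\alpha+\alpha'}a^\v,\partial_x^\alpha b^\v\rangle_x$ cross term in the definition of $\mathbb{E}_{k,F}$. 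After integration by parts this produces $\|\nabla_x b^\v\|_{H^{k-1}_x}^2/\v$ and $\|\nabla_x a^\v\|_{H^{k-1}_x}^2$, while the Poisson equation gives $\|\nabla_x\phi^\v\|_{H^k_x}^2$ via elliptic regularity; the Poincar\'e-type inequality \eqref{The-Poinccare-inequality} then upgrades $\|\nabla_x a^\v\|$ to full $H^{k-1}_x$ control of $a^\v$.

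The hard part is closing these three estimates simultaneously against the $1/\v$ and $1/\v^2$ singularities while handling the cubic nonlinearities $\frac{1}{\v}g^\v v\cdot\nabla_x\phi^\v$ and $\frac{1}{\v}\nabla_v g^\v\cdot\nabla_x\phi^\v$. The point is that every such nonlinear term, when estimated by $L^\infty_x L^2_v\cdot L^2_x L^2_v$-type H\"older with $k\ge 3$ (so Sobolev embedding $H^2_x\hookrightarrow L^\infty_x$ applies), produces one factor bounded by $\mathbb{E}_k^{1/2}$ and two factors absorbed into $\mathbb{D}_k$ (the $1/\v$ is absorbed because the extra derivative that lands on the microscopic part is paired against $\|(\mathbf{I}-\mathbf{P})g^\v\|_\nu/\v$, which lives inside $\mathbb{D}_{k,K,1}$ or $\mathbb{D}_{k,K,2}$). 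Finally I fix the constants: choose $\lambda_3$ small so that $\mathbb{E}_{k,F}$ is equivalent to $\|(a^\v,b^\v,\nabla_x\phi^\v)\|_{H^{k-1}_x}^2$ (the cross term is controlled by Cauchy--Schwarz), then choose $\lambda_2\ll\lambda_1$ to absorb the commutator losses from the velocity-derivative estimate, and finally $\lambda_1$ large enough that the macroscopic cross terms on the right of the fluid estimate are dominated by $\lambda_1\mathbb{D}_{k,K,1}$. The resulting inequality takes the form \eqref{total_energy_estimate} with some explicit $\tilde C>0$ and a universal $C$.
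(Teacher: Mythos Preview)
Your proposal is correct and follows essentially the same architecture as the paper's proof: the three building blocks you describe are precisely Lemmas~\ref{Estimate-kinetic-Mi-1}, \ref{Estiamte-Mi-kinetic-part-mixed-partial}, and \ref{Estiamte-Mi-fluid-part}, and the combination step is Section~\ref{subsec:proof_total_energy}. One small correction on the final weight selection: the mixed-derivative estimate (your second block) loses not only $\mathcal{D}_{k,K,1}$ but also $\mathcal{D}_{k,F}$, so in addition to $\lambda_2\ll\lambda_1$ you also need $\lambda_2\ll\lambda_3$ to absorb that loss into the macroscopic dissipation; the paper resolves this by setting $\lambda_2=1$ and taking $\lambda_1=\lambda_3=\tilde\lambda_1\tilde\lambda_2$ large (rather than $\lambda_3$ small), but your scheme works too once this extra constraint is tracked.
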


We also introduce another type of energy functional $\mathbb{E}_k(t)$ and dissipation functional $\mathbb{D}_k(t)$:
\begin{equation}\label{Def_total_functional}
\begin{aligned}
\mathbb{E}_k(t) := &\ \|g^\v\|_{H^k_x L^2_v}^2+\|\n_v(\I-\P)g^\v\|^2_{H^{k-1}_{x,v}}+\|(a^\v,b^\v)\|_{H^{k-1}_x}^2,\\[6pt]
\mathbb{D}_k(t) := &\ \frac{1}{\v^2} \left(\|(\I-\P)g^\v\|_{\mathcal{H}^k_{x,v}}^2+\|b^\v\|_{H^k_x}^2 \right) +\frac{1}{\v}\|(\n_x b^\v,\up{div}_x b^\v)\|^2_{H^{k-1}_x} + \|\n_x a^\v\|^2_{H^{k-1}_x}+\|\n_x\phi^\v\|_{H^k_x}^2,
\end{aligned}
\end{equation}
and one can directly verify that the two types of definition are equivalent:
\begin{equation}\label{eq:equivalent}
\mathbb{E}_k(t)\sim \E_k(t),  \quad \mathbb{D}_k(t)\sim\D_k(t).
\end{equation}


Based on Proposition \ref{total_energy_dissipation} and the equivalent relation \eqref{eq:equivalent}, we can also obtain the following energy estimate that is equivalent to \eqref{Def_total_functional}.

\begin{corollary}\label{cor:energy-estimate}
For any integer $k\geq\,3$, let $(g^\v,\n_x\phi^\v)$ be the solution to the VPFP system \eqref{The-VPFP-system}, there exists a constant $C > 0$ independent of $\v$ and $\tau$ such that, for any $\tau\in [0,\infty)$,
\begin{equation}
\frac{1}{2} \mathbb{E}_k(\tau) + \int_0^\tau\mathbb{D}_k(s) \,\d s \leq C\sup_{0 \leq s\leq \tau} \mathbb{E}^{\frac{1}{2}}_k(s)\int_0^\tau\mathbb{D}_k(s) \,\d s + C\,\mathbb{E}_k(0).
\end{equation} 
\end{corollary}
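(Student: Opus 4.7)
The plan is straightforward: integrate the pointwise differential inequality supplied by Proposition \ref{total_energy_dissipation} in time, then pass from the functionals $(\E_k,\D_k)$ to $(\mathbb{E}_k,\mathbb{D}_k)$ through the equivalence \eqref{eq:equivalent}. No further kinetic, fluid, or Poisson analysis is required here, since all of the heavy lifting has already been absorbed into Proposition \ref{total_energy_dissipation}.

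First, I would integrate
\[
\tfrac{1}{2}\tfrac{\d}{\d t}\E_k(t) + \tilde{C}\,\D_k(t) \leq C\,\E_k^{1/2}(t)\,\D_k(t)
\]
over $s\in[0,\tau]$, which yields
\[
\tfrac{1}{2}\E_k(\tau) + \tilde{C}\int_0^\tau \D_k(s)\,\d s \leq \tfrac{1}{2}\E_k(0) + C\int_0^\tau \E_k^{1/2}(s)\,\D_k(s)\,\d s.
\]
On the right-hand side I would bound $\E_k^{1/2}(s)\leq \sup_{0\leq s\leq \tau}\E_k^{1/2}(s)$ inside the time integral and pull the supremum out, producing an inequality of exactly the desired shape but still phrased in terms of $\E_k$ and $\D_k$.

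Finally, I would invoke the equivalence $\mathbb{E}_k \sim \E_k$ and $\mathbb{D}_k \sim \D_k$ from \eqref{eq:equivalent}: lower-bounding the left-hand side by multiples of $\mathbb{E}_k(\tau)$ and $\int_0^\tau \mathbb{D}_k(s)\,\d s$, and upper-bounding the right-hand side by multiples of $\mathbb{E}_k(0)$ and $\sup_{0\leq s\leq\tau}\mathbb{E}_k^{1/2}(s)\cdot\int_0^\tau \mathbb{D}_k(s)\,\d s$. After rescaling by the worst comparison constant so as to secure the prefactor $\tfrac{1}{2}$ in front of $\mathbb{E}_k(\tau)$, and collecting the remaining positive multiplicative constants into a single $C$, the stated inequality follows. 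The constant $C$ is independent of $\v$ and $\tau$ because both the comparison constants in \eqref{eq:equivalent} and the constants $C$, $\tilde{C}$ supplied by Proposition \ref{total_energy_dissipation} enjoy this independence.

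There is no genuine analytical obstacle: the step is essentially a bookkeeping translation from the differential form of the energy estimate to the integral form most convenient for the subsequent bootstrap continuation argument used to upgrade the local existence in Proposition \ref{Local-in-time} to the global-in-time estimate \eqref{Uniform energy estimate} of Theorem \ref{Global-in-time-solution-of-VPFP}. The only mild care required is keeping track of which direction of each equivalence in \eqref{eq:equivalent} is being used so that no term secretly picks up a $\v$-dependent factor.
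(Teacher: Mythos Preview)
Your proposal is correct and matches the paper's approach: the paper simply states that the corollary follows ``based on Proposition \ref{total_energy_dissipation} and the equivalent relation \eqref{eq:equivalent}'' without writing out any details, and your argument---integrate the differential inequality in time, pull out the supremum, then convert via \eqref{eq:equivalent}---is exactly the intended unpacking of that remark.
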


In what follows, we will specifically discuss how to make the energy estimate for different parts, and summarize all the parts to conclude Proposition \ref{total_energy_dissipation} (or equivalently Corollary \ref{cor:energy-estimate}) in Section \ref{subsec:proof_total_energy}.


\subsection{Energy estimate for the kinetic part}
\label{subsec:remainder_kinetic}

In this subsection, we make the energy estimate for the kinetic part of the VPFP system \eqref{The-VPFP-system}, which essentially relies on the coercivity property of $L$ to produce dissipation.

\begin{lemma}\label{Estimate-kinetic-Mi-1}
For any integer $k\geq 3$, let $(g^\v,\n_x\phi^\v)$ be the solution to the VPFP system \eqref{The-VPFP-system}, then there exist constants $C_1> 0$ independent of $\v$ and $t$ such that, for $t \geq 0$,
\begin{equation}\label{Step-one}
\begin{aligned}
\frac{1}{2}\frac{\d}{\d t}\E_{k,K,1}(t)+C_1\D_{k,K,1}(t) \lesssim \E^{\frac{1}{2}}_k(t)\D_k(t),
\end{aligned}
\end{equation}
where $\E_{k,K,1}(t)$, $\D_{k,K,1}(t)$, $\E_k(t)$, and $\D_k(t)$ are defined in \eqref{part-energy-functionals} and \eqref{energy-dissipation-functional}, respectively.
\end{lemma}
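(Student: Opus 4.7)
The plan is to apply $\p_x^\a$ with $|\a|\leq k$ to \eqref{The-VPFP-system}$_1$, pair with $\p_x^\a g^\v$ in $L^2_{x,v}$, and sum over $\a$. The time derivative produces $\frac{1}{2}\frac{d}{dt}\|g^\v\|^2_{H^k_xL^2_v}$, while the free-transport term $\frac{1}{\v}v\c\n_x\p_x^\a g^\v$ integrates to zero by antisymmetry in $x$. Since the projection $\P$ commutes with $\p_x$, the Fokker--Planck term, combined with the coercivity \eqref{L-dissipation} applied to each spatial derivative, yields
\begin{equation*}
\frac{1}{\v^2}\sum_{|\a|\leq k}\l L\p_x^\a g^\v,\p_x^\a g^\v\r_{x,v} \;\geq\; \frac{C_0}{\v^2}\sum_{|\a|\leq k}\|\p_x^\a(\I-\P)g^\v\|^2_{\nu}+\frac{1}{\v^2}\|b^\v\|^2_{H^k_x},
\end{equation*}
which is precisely $C_1\D_{k,K,1}$ after absorbing constants.

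To recover the Poisson part $\|\n_x\phi^\v\|^2_{H^k_x}$ on the left, I would handle the linear field term $\frac{1}{\v}v\c\n_x\phi^\v\M$: taking its $v\M$-moment produces $\frac{1}{\v}\l\p_x^\a\n_x\phi^\v,\p_x^\a b^\v\r_x$. Combining the $\M$-moment of \eqref{The-VPFP-system}$_1$, namely the local mass balance $\p_t a^\v+\frac{1}{\v}\up{div}_x b^\v=0$, with the time derivative of $-\Delta_x\phi^\v=a^\v$ and integration by parts, yields the identity
\begin{equation*}
\frac{1}{\v}\l\p_x^\a\n_x\phi^\v,\p_x^\a b^\v\r_x = \frac{1}{2}\frac{d}{dt}\|\p_x^\a\n_x\phi^\v\|^2_{L^2_x}.
\end{equation*}
Summing over $|\a|\leq k$ then completes the left-hand side to exactly $\frac{1}{2}\frac{d}{dt}\E_{k,K,1}(t)+C_1\D_{k,K,1}(t)$.

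The main obstacle is the nonlinear contribution $\frac{1}{\v}\p_x^\a\big(\frac{g^\v}{2}v\c\n_x\phi^\v-\n_v g^\v\c\n_x\phi^\v\big)$ tested against $\p_x^\a g^\v$. Leibniz expands these into sums indexed by $\a_1\leq\a$; the $\a_1=\a$ piece of the $\n_v$-term vanishes after a single integration by parts in $v$, eliminating the most dangerous contribution. For the surviving pieces I would split $g^\v=\P g^\v+(\I-\P)g^\v$ and use the Sobolev embedding $H^2\hookrightarrow L^\infty$ (which is precisely why $k\geq 3$ is imposed) together with the Poincar\'e inequality \eqref{The-Poinccare-inequality} to place one low-derivative macroscopic factor in $L^\infty_x$. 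The crucial point is that every remaining piece carries at least one factor drawn from $\{\|b^\v\|_{H^k_x},\,\|(\I-\P)g^\v\|_{\nu},\,\|\n_x\phi^\v\|_{H^k_x}\}$; the first two are bounded by $\v\,\D_k^{1/2}$ and the last by $\D_k^{1/2}$, so the $\v$-gain coming from the dissipation cancels the singular $\frac{1}{\v}$ prefactor, leaving the desired $\E_k^{1/2}\D_k$ bound after pairing with the remaining $L^2_{x,v}$-factor of $g^\v$. The trickiest configuration is the purely macroscopic cubic $\frac{1}{\v}\l a^\v b^\v,\n_x\phi^\v\r_x$ arising from the $v$-moment of $(\P g^\v)^2\, v\c\n_x\phi^\v$, where the required $\v$-gain must come from the dissipation estimate $\|b^\v\|_{L^2_x}\leq\v\,\D_k^{1/2}$ rather than from any microscopic remainder; the residual macroscopic factors are then controlled by $\|\n_x\phi^\v\|_{L^\infty_x}\lesssim\D_k^{1/2}$ and $\|a^\v\|_{L^2_x}\lesssim\E_k^{1/2}$.
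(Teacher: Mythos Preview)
Your proposal is correct and follows essentially the same route as the paper: apply $\p_x^\a$, pair with $\p_x^\a g^\v$, use the coercivity \eqref{L-dissipation} for the dissipation, convert the linear field term into $\tfrac12\frac{\d}{\d t}\|\p_x^\a\n_x\phi^\v\|_{L^2_x}^2$ via the continuity equation and the Poisson equation, and control the nonlinear terms by Leibniz plus the Macro--Micro split together with Sobolev and Poincar\'e. Your identification of the purely macroscopic cubic $\tfrac{1}{\v}\l a^\v b^\v,\n_x\phi^\v\r_x$ as the worst case, and the way you close it using $\|b^\v\|_{L^2_x}\le \v\,\D_k^{1/2}$, matches the paper's treatment exactly; the only minor imprecision is in your general heuristic sentence, which should say that each nonlinear piece carries \emph{two} dissipative factors (one is always $\n_x\phi^\v\in\D_{k,F}^{1/2}$, the other must supply the $\v$-gain via $b^\v$ or $(\I-\P)g^\v$) rather than just one.
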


\begin{proof}
Applying the derivative operator $\p_x^\a$ with $0\leq|\a|\leq k$ to the VPFP system \eqref{The-VPFP-system}, multiplying with $\p_x^\a g^\v$, integrating over $x,v$, and integrating by parts, we have,
\begin{multline}\label{Estimate-nabla-g-0}
\frac{1}{2}\frac{\d}{\d t}\|\p_x^\a g^\v\|^2_{L^2_{x,v}}+\frac{C_0}{\v^2}\|\p_x^\a(\I-\P)g^\v\|^2_{\nu}+\frac{1}{\v^2}\|\p_x^\a b^\v\|^2_{L^2_{x}} \\[5pt]
\leq\underbrace{-\frac{1}{\v}\l v\c\n_x\p_x^{\a}\phi^\v\M, \p_x^\a g^\v\r_{x,v}}_{B_{11}}
\underbrace{-\frac{1}
{\v}\l\p_x^\a(\frac{g^\v}{2}v\c\n_x\phi^\v),\p_x^\a g^\v\r_{x,v}}_{B_{12}}+ \underbrace{\frac{1}{\v}\l\p_x^\a (\n_v g^\v\c\n_x\phi^\v),\p_x^\a g^\v\r_{x,v}}_{B_{13}},
\end{multline}
where \eqref{Decop-L} and \eqref{L-dissipation} are utilized.

For $B_{11}$, by using \eqref{The-formally-limits-1}, $\eqref{The-VPFP-system}_2$ and \eqref{ab}, we have
\begin{equation}\label{B11}
\begin{aligned}
B_{11}=-\frac{1}{\v}\l\p_x^\a\n_x\phi^\v,\p_x^\a\l g^\v,v\M\r_v\r_x =&-\frac{1}{\v}\l\p_x^\a\n_x\phi^\v,\p_x^\a b^\v\r_x \\
=&\frac{1}{\v}\l\p_x^\a\phi^\v,\p_x^\a\up{div}_x b^\v\r_x\\
=&-\l\p_x^\a\phi^\v,\p_x^\a\p_t a^\v\r_x \\
=&\l\p_x^\a\phi^\v,\p_t\p_x^\a\Delta_x \phi^\v\r_x=-\frac{1}{2}\frac{\d}{\d t}\|\p_x^\a\n_x\phi^\v\|_{L^2_x}^2.
\end{aligned}
\end{equation}

For $B_{12}$, if $|\a|=0$, considering the decomposition \eqref{MM}, then 
\begin{equation}\label{B12-0}
\begin{aligned}
B_{12}=&-\frac{1}{2\v}\l v\c\n_x\phi^\v,|g^\v|^2\r_{x,v}\\
=&-\frac{1}{2\v}\l v\c\n_x\phi^\v,|(\I-\P)g^\v|^2\r_{x,v}-\frac{1}{\v}\l v\c\n_x\phi^\v,(\I-\P)g^\v\P g^\v\r_{x,v}-\frac{1}{2\v}\l v\c\n_x\phi^\v,|\P g^\v|^2\r_{x,v}\\
=&-\frac{1}{2\v}\l v\c\n_x\phi^\v,|(\I-\P)g^\v|^2\r_{x,v}-\frac{1}{\v}\l v\c\n_x\phi^\v,(\I-\P)g^\v v\c b^\v\M\r_{x,v}-\frac{1}{\v}\l \n_x\phi^\v\c b^\v,a^\v\r_x\\
\lesssim&\frac{1}{\v}\|\n_x\phi^\v\|_{L^\infty_x}\|(\I-\P)g^\v\|_{\nu}^2+\frac{1}{\v}\|\n_x\phi^\v\|_{L^4_x}\|(\I-\P)g^\v\|_{L^2_{x,v}}\|b^\v\|_{L^4_x}+\frac{1}{\v}\|\n_x\phi^\v\|_{L^\infty_x}\|b^\v\|_{L^2_x}\|a^\v\|_{L^2_x}\\
\lesssim&\frac{1}{\v}\|\n_x\phi^\v\|_{H^2_x}\|(\I-\P)g^\v\|_{\nu}^2+\frac{1}{\v}\|\n_x\phi^\v\|_{H^1_x}\|(\I-\P)g^\v\|_{\nu}\|b^\v\|_{H^1_x}+\frac{1}{\v}\|\n_x\phi^\v\|_{H^2_x}\| b^\v\|_{L^2_x}\|\n_x a^\v\|_{L^2_x}\\
\lesssim&\E^{\frac{1}{2}}_k(t)\D_k(t),
\end{aligned}
\end{equation}
where the Poincar$\acute{\up{e}}$ inequality \eqref{The-Poinccare-inequality} is used in the second inequality.

If $|\a|\geq 1$, $B_{12}$ is divided into four parts,
\begin{multline}\label{B12-1}
B_{12}=\underbrace{-\frac{1}{2\v}\l v\c\n_x\phi^\v,|\p_x^\a g^\v|^2\r_{x,v}}_{B_{121}}\underbrace{-\frac{1}{2\v}\l v\c\n_x\p_x^\a\phi^\v,g^\v\p_x^\a g^\v\r_{x,v}}_{B_{122}}\\
\underbrace{-\frac{1}{2\v}\sum_{1\leq|\b|\leq|\a|-1}C_\a^\b\l v\c\n_x\p_x^\b\phi^\v,\p_x^{\a-\b}g^\v\p_x^\a g^\v\r_{x,v}}_{B_{123}}.
\end{multline}
Similar to the estimate in \eqref{B12-0}, we can estimate $B_{121}$ as
\begin{equation}\label{B121}
\begin{aligned}
|B_{121}|\lesssim \E^{\frac{1}{2}}_k(t)\D_k(t).
\end{aligned}
\end{equation}
For $B_{122}$, considering the decomposition \eqref{MM} and equation $\eqref{The-VPFP-system}_2$, we have
\begin{equation*}\label{B122-1}
\begin{aligned}
|B_{122}|=&-\frac{1}{2\v}\l v\c\n_x\p_x^\a\phi^\v,(\I-\P)g^\v\p_x^\a(\I-\P)g^\v\r_{x,v}-\frac{1}{2\v}\l v\c\n_x\p_x^\a\phi^\v,\P g^\v\p_x^\a(\I-\P)g^\v\r_{x,v}\\
&-\frac{1}{2\v}\l v\c\n_x\p_x^\a\phi^\v,(\I-\P) g\p_x^\a \P g^\v\r_{x,v}-\frac{1}{2\v}\l v\c\n_x\p_x^\a\phi^\v,\P g^\v\p_x^\a \P g^\v\r_{x,v}\\[4pt]
=&-\frac{1}{2\v}\l v\c\n_x\p_x^\a\phi^\v,(\I-\P)g^\v\p_x^\a(\I-\P)g^\v\r_{x,v}-\frac{1}{2\v}\l v\c\n_x\p_x^\a\phi^\v,v\c b\M\p_x^\a(\I-\P)g^\v\r_{x,v}\\
&-\frac{1}{2\v}\l v\c\n_x\p_x^\a\phi^\v,(\I-\P) g^\v v\c\p_x^\a b^\v\M \r_{x,v}-\frac{1}{2\v}\l v\c\n_x\p_x^\a\phi^\v,(a^\v v\c\p_x^\a b^\v+v\c b^\v \p_x^\a a^\v)M \r_{x,v}\\[4pt]
\lesssim&\frac{1}{\v}\|\p_x^\a\n_x\phi^\v\|_{L^2_x}\|(\I-\P)g^\v\|_{L^\infty_x L^2_v}\|\p_x^\a(\I-\P)g^\v\|_{L^2_{x,v}}+\frac{1}{\v}\|\p_x^\a\n_x\phi^\v\|_{L^2_x}\|b^\v\|_{L^\infty_x}\|\p_x^\a(\I-\P)g^\v\|_{L^2_{x,v}}\\
&+\frac{1}{\v}\|\p_x^\a\n_x\phi^\v\|_{L^2_x}\|(\I-\P)g^\v\|_{L^\infty_x L^2_v}\|\p_x^\a b^\v\|_{L^2_x}+\frac{1}{\v}\|\p_x^\a\n_x\phi^\v\|_{L^2_x}\| b^\v\|_{L^\infty_x}\|\p_x^\a a\|_{L^2_x}\\
&+\Big|\frac{1}{2\v}\l \p_x^{\a-1} b^\v\c\n_x\p_x^{\a-1}\Delta_x\phi^\v,a^\v \r_x\Big|+\Big|\frac{1}{2\v}\l \p_x^{\a-1} b^\v\c\n_x\p_x^\a\phi^\v,\n_x a^\v \r_x\Big|\\[4pt]
\lesssim&\frac{1}{\v}\|\n_x\phi^\v\|_{H^k_x}\|(\I-\P)g^\v\|_{H^2_x L^2_v}\|(\I-\P)g\|_{H^k_x L^2_v}+\frac{1}{\v}\|\n_x\phi^\v\|_{H^k_x}\|b^\v\|_{H^2_x}\|(\I-\P)g^\v\|_{H^k_x L^2_v}\\
&+\frac{1}{\v}\|\p_x^\a\n_x\phi^\v\|_{L^2_x}\|(\I-\P)g^\v\|_{L^\infty_x L^2_v}\|\p_x^\a b^\v\|_{L^2_x}+\frac{1}{\v}\|\n_x\phi\|_{H^k_x}\| b^\v\|_{H^2_x}\|\n_x a^\v\|_{H^{k-1}_x}\\
&+\Big|\frac{1}{2\v}\l \p_x^\a b^\v\c\n_x\p_x^\a\phi^\v, a^\v \r_x\Big|\\[4pt]
\lesssim&\E^{\frac{1}{2}}_k(t)\D_k(t)+\frac{1}{\v}\|\p_x^{\a-1}b^\v\|_{L^2_x}\|\n_x\p_x^{\a-1}a^\v\|_{L^2_x}\|a^\v\|_{L^\infty_x}+\frac{1}{\v}\|\p_x^{\a-1}b^\v\|_{L^4_x}\|\n_x\p_x^\a \phi^\v\|_{L^2_x}\|\n_x a^\v\|_{L^4_x}\\
\lesssim&\E^{\frac{1}{2}}_k(t)\D_k(t)+\frac{1}{\v}\|b^\v\|_{H^
{k-1}_x}\|\n_x a^\v\|_{H^{k-1}_x}\|a^\v\|_{H^2_x}+\frac{1}{\v}\|b^\v\|_{H^
k_x}\|\n_x \phi^\v\|_{H^k_x}\|\n_x a^\v\|_{H^1_x}\\
\lesssim&\E^{\frac{1}{2}}_k(t)\D_k(t),
\end{aligned}
\end{equation*}
Similar to the estimate for $B_{122}$, we can estimate $B_{123}$ as
\begin{equation}\label{B123}
\begin{aligned}
|B_{123}|\lesssim \E^{\frac{1}{2}}_k(t)\D_k(t).
\end{aligned}
\end{equation}
Hence, collecting the previous estimates of $B_{121}$, $B_{122}$, $B_{123}$, we obtain
\begin{equation}\label{B12}
\begin{aligned}
|B_{12}| \lesssim \E^{\frac{1}{2}}_k(t)\D_k(t).
\end{aligned}
\end{equation}


For $B_{13}$, if $|\a|=0$, we have
\begin{equation}\label{B13-1}
\begin{aligned}
B_{13}=\frac{1}{2}\l\n_x\phi^\v,\n_v|g^\v|^2\r_{x,v}=0,
\end{aligned}
\end{equation}
if $|\a|\geq 1$, we can follow the estimate of $B_{12}$ and obtain that 
\begin{equation}\label{B13-2}
\begin{aligned}
\big|B_{13}\big|=&\Big|\frac{1}{\v}\l\n_v g^\v\c\n_x\p_x^\a\phi^\v,\p_x^\a g^\v\r_{x,v}+\frac{1}{\v}\sum_{1\leq|\b|\leq|\a|-1}\binom{\a}{\b}\l\p_x^\b\n_v g^\v\c\n_x\p_x^{\a-\b}\phi^\v,\p_x^\a g^\v\r_{x,v}\Big|\\
\lesssim&\E^{\frac{1}{2}}_k(t)\D_k(t).
\end{aligned}
\end{equation}
Therefore, by combining \eqref{B13-1} and \eqref{B13-2}, we have
\begin{equation}\label{B13}
\begin{aligned}
\big|B_{13}\big|\lesssim&\E^{\frac{1}{2}}_k(t)\D_k(t).
\end{aligned}
\end{equation}

Finally, substituting the estimates \eqref{B11}, \eqref{B12} and \eqref{B13} into \eqref{Estimate-nabla-g-0}, and then summing up $|\a|$ from 0 to $k$, the proof of the lemma is completed.
\end{proof}

To complete the total energy estimate, we also need to estimate the mixed partial derivative of $(\I-\P) g^\v$. To this end, applying $(\I-\P)$ to both sides of $\eqref{The-VPFP-system}_1$, it follows that
\begin{multline}\label{The-VPFP-system-I-P-g}
    \p_t(\I-\P)g^\v+\frac{1}{\v}(\I-\P)(v\c\n_x g^\v)+\frac{1}{\v}(\I-\P)(v\c\n_x\phi^\v\M) \\
    +\frac{1}{\v}(\I-\P)(\frac{g^\v}{2}v\c\n_x\phi^\v-\n_v g^\v\c\n_x\phi^\v)+\frac{1}{\v^2}L(\I-\P)g^\v=0,
\end{multline}

In the following Lemma \ref{Estiamte-Mi-kinetic-part-mixed-partial}, we present the energy estimate of $(\I-\P) g^\v$.
\begin{lemma}\label{Estiamte-Mi-kinetic-part-mixed-partial}
For any integer $k\geq3$, let $(g^\v,\n_x\phi^\v)$ be the solution to the VPFP system \eqref{The-VPFP-system}, then there exist constants $C_2,\,\tilde{C}_2 > 0$ independent of $\v$ and $t$ such that, for $t \geq 0$,
\begin{equation}\label{Step-two}
\frac{1}{2}\frac{\d}{\d t}\E_{k,K,2}(t)+C_2\D_{k,K,2}(t)-\tilde{C
}_2\big(\D_{k,F}(t)+\D_{k,K,1}(t)\big)\lesssim \E^{\frac{1}{2}}_k(t)\D_k(t),
\end{equation}
where the energy and dissipation functionals $\E_{k,K,2}(t)$, $\D_{k,K,1},\,\D_{k,K,2}(t),\,\D_{k,F}(t)$, $\E_k(t)$, and $\D_k(t)$ are defined in \eqref{part-energy-functionals} and \eqref{energy-dissipation-functional}, respectively.
\end{lemma}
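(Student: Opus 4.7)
\textbf{Proof proposal for Lemma \ref{Estiamte-Mi-kinetic-part-mixed-partial}.}
The plan is to apply $\p_x^\a\p_v^{\b+\b'}$ to the equation \eqref{The-VPFP-system-I-P-g} satisfied by $(\I-\P)g^\v$, test against $\p_x^\a\p_v^{\b+\b'}(\I-\P)g^\v$, integrate by parts in $(x,v)$, and then sum over $|\a|+|\b|\le k-1$ with carefully chosen weights $C_{\a,\b}$ ordered so that the only genuinely dangerous term can be absorbed in a hierarchical fashion. The skeleton estimate one expects is
\begin{equation*}
\tfrac12\tfrac{\d}{\d t}\|\p_x^\a\p_v^{\b+\b'}(\I-\P)g^\v\|^2_{L^2_{x,v}}
+\tfrac{C}{\v^2}\|\p_x^\a\p_v^{\b+\b'}(\I-\P)g^\v\|_\nu^2
\le (\text{streaming remainders})+(\text{field remainders}),
\end{equation*}
where the coercivity of $L$, together with the commutators $[\p_v^{\b+\b'},L]$ producing only lower-order velocity derivatives (absorbed in $\D_{k,K,1}$ or by induction), yields the main dissipation $C_2\D_{k,K,2}(t)$ after summation.

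The first class of remainders comes from the streaming term $\frac{1}{\v}(\I-\P)(v\c\n_x g^\v)$. When $\p_v^{\b+\b'}$ falls on $v$, each resulting $\frac{1}{\v}\p_x^{\a+e_i}\p_v^{\b+\b'-e_i}(\I-\P)g^\v$ paired with $\p_x^\a\p_v^{\b+\b'}(\I-\P)g^\v$ is estimated by Cauchy--Schwarz with a small parameter $\eta$: the $\eta/\v^2$ piece is absorbed into the coercive term, while the $1/\eta$ piece is either a lower-order velocity derivative (handled by induction on $|\b|$ with descending $C_{\a,\b}$) or a pure spatial derivative of $(\I-\P)g^\v$, which is controlled by $\D_{k,K,1}$, hence the appearance of $\tilde C_2\D_{k,K,1}$ on the left of \eqref{Step-two}. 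When $\p_x$ falls on $\P g^\v$ instead of $(\I-\P)g^\v$, one uses $\P g^\v=a^\v\M+v\c b^\v\M$ so the remainder is bounded by $\frac{1}{\v}(\|\n_x a^\v\|_{H^{k-1}_x}+\|\n_x b^\v\|_{H^{k-1}_x})$ times the coercive norm, again controlled by $\tilde C_2\D_{k,F}$ and Young's inequality.

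The second class of remainders comes from the field/drift terms $\frac{1}{\v}v\c\n_x\phi^\v\M$ and the nonlinear $\frac{1}{\v}(\frac{g^\v}{2}v\c\n_x\phi^\v-\n_v g^\v\c\n_x\phi^\v)$. The linear one is purely macroscopic modulo $\M$, so applying $(\I-\P)$ and then $\p_v$ derivatives gives Gaussian-weighted polynomials in $v$, and it is controlled by $\|\n_x\phi^\v\|_{H^k_x}\c\frac{1}{\v}\|\p_x^\a\p_v^{\b+\b'}(\I-\P)g^\v\|_\nu$, yielding a $\D_{k,F}\c\D_{k,K,2}$-type split absorbed as before. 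The nonlinear terms are handled, exactly as in Lemma \ref{Estimate-kinetic-Mi-1}, by distributing derivatives via Leibniz, using Sobolev embedding $H^2_x\hookrightarrow L^\infty_x$ for two spatial derivatives, and bounding the resulting trilinear form by $\E_k^{1/2}(t)\D_k(t)$; the factor $\M$ and polynomial weights in $v$ are harmless since all $v$-derivatives hit $\M$-decaying objects or are compensated by the $\nu$-weight.

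The main obstacle will be the bookkeeping of the streaming remainder $\frac{1}{\v}\p_x^{\a+e_i}\p_v^{\b+\b'-e_i}(\I-\P)g^\v$: choosing the hierarchy of constants $C_{\a,\b}$ so that after summing over $|\a|+|\b|\le k-1$ each such term with $|\b|\ge 1$ is strictly dominated by the coercive contribution of its lower-velocity-derivative neighbor multiplied by a larger constant, while the $|\b|=0$ residues are covered by $\tilde C_2\D_{k,K,1}$. Once these weights are fixed (by induction downward on $|\b|$ from $k-1$ to $0$, as in \cite{GHMAZP10,Hwang-Jang2013}), summation yields \eqref{Step-two} with some effective $C_2>0$ and an admissible $\tilde C_2>0$, and the remaining cubic pieces are collected into the $\E_k^{1/2}(t)\D_k(t)$ term.
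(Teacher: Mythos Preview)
Your proposal is correct and follows essentially the same route as the paper: differentiate the $(\I-\P)g^\v$ equation, test against the same derivative, extract the $\nu$-coercivity of $L$ modulo lower-order commutators, split the streaming term into its $(\I-\P)g^\v$ and $\P g^\v$ pieces (the latter producing the $\|\n_x a^\v\|_{H^{k-1}_x}+\|\n_x b^\v\|_{H^{k-1}_x}$ contribution controlled by $\D_{k,F}$), bound the nonlinear field terms by $\E_k^{1/2}\D_k$ as in Lemma~\ref{Estimate-kinetic-Mi-1}, and close by induction on the number of $v$-derivatives with a descending hierarchy of weights $C_{\a,\b}$.

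One small simplification you miss: the linear field term $\frac{1}{\v}v\c\n_x\phi^\v\M$ actually satisfies $(\I-\P)(v\c\n_x\phi^\v\M)=0$ identically, since $v\c\n_x\phi^\v\M$ lies in the range of $\P_1$; so there is nothing to estimate there, and the $\D_{k,F}$--$\D_{k,K,2}$ split you propose for it is unnecessary (though of course harmless).
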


\begin{proof}
Applying $\p_x^\a \p_v^\b$ with $1\leq|\a|+|\b|\leq k$ and $ |\b| \geq 1 $ to \eqref{The-VPFP-system-I-P-g}, multiplying by $\p_x^\a\p_v^\b(\I-\P) g^\v$, and integrating over $x,v$, we obtain, for $|\b'|=1$ and $|\b''|=2$,
\begin{equation}\label{Estimate-mix-partial-I-P-g-1}
\begin{aligned}
& \ \frac{1}{2}\frac{\d}{\d t}\|\p_x^\a\p_v^\b(\I-\P)g^\v\|^2_{L^2_{x,v}} + \frac{C_0}{\v^2}\|(\I-\P_0)\p_x^\a\p_v^\b(\I-\P)g^\v\|^2_{\nu} \\[5pt]
\leq &\underbrace{-\frac{ \binom{\b}{\b'} }{2\v^2}\l v\p_x^\a\p_v^{\b-\b'}(\I-\P)g^\v,\p_x^\a\p_v^\b(\I-\P)g^\v\r_{x,v}}_{B_{21}} \underbrace{-\frac{\binom{\b}{\b''}}{2\v^2}\l \p_x^\a\p_v^{\b-\b''}(\I-\P)g^\v,\p_x^\a\p_v^{\b-\b''}\Delta_v(\I-\P)g^\v\r_{x,v}}_{B_{22}} \\
& \underbrace{-\frac{1}{\v}\l\p_x^\a\p_v^\b(\I-\P)(v\c\n_x g^\v),\p_x^\a\p_v^\b(\I-\P)g^\v\r_{x,v}}_{B_{23}} \underbrace{-\frac{1}{\v}\l\p_x^\a\p_v^\b(\I-\P)(\frac{g^\v}{2}v\c\n_x\phi^\v),\p_x^\a\p_v^\b(\I-\P)g^\v\r_{x,v}}_{B_{24}}\\
& +\underbrace{\frac{1}{\v}\l\p_x^\a\p_v^\b(\I-\P)(\n_v g^\v\c\n_x\phi^\v),\p_x^\a\p_v^\b(\I-\P)g^\v\r_{x,v}}_{B_{25}},
\end{aligned}
\end{equation}
where we use the coercivity estimate \eqref{L-dissipation} and the following direct calculations:
\begin{equation*}
    (\I-\P)(v\c\n_x\phi^\v\M)=0.
\end{equation*}

Notice that, for $\P_0$ in \eqref{def-L-P} and any $h$,
\begin{equation}\label{Notice}
    \begin{aligned}
        \|(\I-\P_0)\p_v^\b h\|_{\nu}^2=& \Big\l\n_v\big[(\I-\P_0)\p_v^\b h\big],\n_v\big[(\I-\P_0)\p_v^\b h\big]\Big\r_{x,v} + \Big\l 1+|v|^2,|(\I-\P_0)\p_v^\b h|^2\Big\r_{x,v}\\[5pt]
        =&\|\n_v\p_v^\b h\|_{L^2_{x,v}}^2-2 \Big\l\n_v\P_0\p_v^\b h,\n_v\p_v^\b h\Big\r_{x,v} + \|\n_v\P_0\p_v^\b h\|_{L^2_{x,v}}^2\\[5pt]
        & + \|\sqrt{1+|v|^2}\p_v^\b h\|_{L^2_{x,v}}^2-2\Big\l(1+|v|^2)\P_0\p_v^\b h,\p_v^\b h \Big\r_{x,v} + \|\sqrt{1+|v|^2}\P_0\p_v^\b h\|_{L^2_{x,v}}^2\\[5pt]
        \geq&\ \|\n_v\p_v^\b h\|_{L^2_{x,v}}^2 - 2\Big\l\n_v\big[\l h,P_{\b}\M\r_v\M\big],\n_v\p_v^\b h \Big\r_{x,v}\\[5pt]
        &+\|\sqrt{1+|v|^2}\p_v^\b h\|_{L^2_{x,v}}^2-2\Big\l(1+|v|^2)\big(\l h,P_{\b}\M\r_v\M\big),\p_v^\b h\Big\r_{x,v}\\[5pt]
        \geq&\ \frac{1}{2}\big(\|\n_v\p_v^\b h\|_{L^2_{x,v}}^2+\|\sqrt{1+|v|^2}\p_v^\b h\|_{L^2_{x,v}}^2\big)-C\|h\|_{L^2_{x,v}}^2\\[5pt]
        =&\|\p_v^\b h\|_{\nu}^2-C\|h\|_{L^2_{x,v}}^2,
    \end{aligned}
\end{equation}
where $\int_{\R^3}\p_v^\b h\M\d v=\int_{\R^3}h P_{\b}(v)\M \d v$ with polynomial function $P_{\b}(v)$.
Then, choosing $h=\p_x^\a(\I-\P)g^\v$ in \eqref{Notice} above, we have
\begin{equation}\label{B20}
\begin{aligned}
\frac{C_0}{\v^2}\|(\I-\P_0)\p_x^\a\p_v^\b(\I-\P)g^\v\|^2_{\nu} \,\geq\, \frac{C_0}{2\v^2}\|\p_x^\a\p_v^\b(\I-\P)g^\v\|^2_{\nu}-\frac{C}{\v^2}\|\p_x^\a(\I-\P)g^\v\|_{\nu}^2.
\end{aligned}  
\end{equation}

For $B_{21}$ and $B_{22}$, by applying integration by parts, for $|\b'|=1$ and $|\b''|=2$,
\begin{equation}\label{B21-B22}
\begin{aligned}
|B_{21}| + |B_{22}|= &\ \Big|-\frac{\binom{\b}{\b'}}{2\v^2} \big\l v\p_x^\a\p_v^{\b-\b'}(\I-\P)g^\v,\p_x^\a\p_v^\b(\I-\P)g^\v \big\r_{x,v} \Big| \\
& \qquad \qquad \qquad \qquad \qquad + \Big| -\frac{\binom{\b}{\b''}}{2\v^2} \big\l\p_x^\a\p_v^{\b-\b''}(\I-\P)g^\v,\p_x^\a\p_v^{\b-\b''}\Delta_v(\I-\P)g^\v \big\r_{x,v} \Big|\\[5pt]
\leq&\ \frac{C_0}{2^6\v^2}\|\p_x^\a\p_v^\b(\I-\P)g^\v\|_{\nu}^2+\frac{C}{\v^2}\|\p_x^\a\p_v^{\b-\b'}(\I-\P)g^\v\|^2_{L^2_{x,v}},
\end{aligned}
\end{equation}
where the Young inequality and H$\ddot{\up{o}}$lder inequality are used in the inequality.

For $B_{23}$, it can be divided into three parts:
\begin{equation}\label{B23-1}
\begin{aligned}
B_{23}=&-\frac{1}{\v}\l\p_x^\a\p_v^\b(\I-\P)(v\c\n_x g^\v),\p_x^\a\p_v^\b(\I-\P)g^\v\r_{x,v}\\[5pt]
=&\underbrace{-\frac{1}{\v}\l\p_x^\a\p_v^\b\big(v\c\n_x(\I-\P)g^\v\big),\p_x^\a\p_v^\b(\I-\P)g^\v\r_{x,v}}_{B_{231}}\underbrace{-\frac{1}{\v}\l\p_x^\a\p_v^\b(v\c\n_x \P g^\v),\p_x^\a\p_v^\b(\I-\P)g^\v\r_{x,v}}_{B_{232}}\\
&+\underbrace{\frac{1}{\v}\l\p_x^\a\p_v^\b\P\big(v\c\n_x g^\v\big),\p_x^\a\p_v^\b(\I-\P)g^\v\r_{x,v}}_{B_{233}}.
\end{aligned}
\end{equation}
For $B_{231}$, we find, for $|\a'|=|\b'|=1$,
\begin{equation*}\label{B431}
\begin{aligned}
|B_{231}|=&\Big|-\frac{1}{2\v} \Big\l \n_x|\p_x^\a\p_v^\b(\I-\P)g^\v|^2,v\Big\r_{x,v}-\frac{\binom{\b}{\b-\b'}}{\v} \Big\l\p_x^{\a+\a'}\p_v^{\b-\b'}(\I-\P)g^\v,\p_x^\a\p_v^\b(\I-\P)g^\v \Big\r_{x,v}\Big|  \\[5pt]
\leq&\ \frac{C_0}{2^8\v^2}\|\p_x^\a\p_v^\b(\I-\P)g^\v\|^2_{L^2_{x,v}} + C\|\p_x^{\a+\a'}\p_v^{\b-\b'}(\I-\P)g^\v\|^2_{L^2_{x,v}}\\[5pt]
\leq&\ \frac{C_0}{2^8\v^2}\|\p_x^\a\p_v^\b(\I-\P)g^\v\|^2_{L^2_{x,v}} + \frac{C}{\v^2}\|\p_x^{\a+\a'}\p_v^{\b-\b'}(\I-\P)g^\v\|^2_{L^2_{x,v}}.
\end{aligned}
\end{equation*}
For $B_{232}$,  we have, for $|\a'|=|\b'|=1$,
\begin{equation*}
\begin{aligned}
|B_{232}|=&\ \Big| -\frac{1}{\v} \Big\l\p_x^\a\p_v^\b[(v\c\n_x a^\v+v\otimes v:\n_x b^\v)\M],\p_x^\a\p_v^\b(\I-\P)g^\v \Big\r_{x,v} \Big|\\[4pt]
\leq&\ \frac{C_0}{2^8\v^2} \|\p_x^\a\p_v^\b(\I-\P)g^\v\|^2_{L^2_{x,v}} + C\|(\p_x^{\a+\a'}a^\v,\p_x^{\a+\a'}b^\v)\|^2_{L^2_{x}}\\[4pt]
\leq&\ \frac{C_0}{2^8\v^2} \|\p_x^\a\p_v^\b(\I-\P)g^\v\|^2_{L^2_{x,v}} + C\D_{k,F}(t).
\end{aligned}
\end{equation*}
For $B_{233}$, by further noticing that
\begin{equation*}
\begin{aligned}
\P(v\c\n_x g^\v)=&\ \P[v\c\n_x\P g^\v]+\P[v\c\n_x(\I-\P)g^\v]\\[4pt]
=& \ (v\c\n_x a^\v+\up{div}_x b^\v)\M+\P[v\c\n_x(\I-\P)g^\v]\\[4pt]
=&\ \big[v\c\n_x a^\v+\up{div}_x b^\v+\l v\c\n_x(\I-\P)g^\v, \M\r_v+v\c\l v\c\n_x(\I-\P)g^\v,v\M\r_v\big]\M,
\end{aligned}
\end{equation*}
we have, for $|\a'|=1$,
\begin{equation*}
\begin{aligned}
|B_{233}|=&\ \Big|\frac{1}{\v} \l \p_x^\a\p_v^\b \P (v\c\n_x g^\v), \p_x^\a\p_v^\b(\I-\P)g^\v \r_{x,v} \Big|\\[4pt]
\leq &\ \frac{C_0}{2^8\v^2} \|\p_x^\a\p_v^\b(\I-\P)g^\v\|^2_{L^2_{x,v}}  +C\|(\p_x^{\a+\a'}a^\v,\p_x^\a\up{div}_x b^\v)\|^2_{L^2_{x}} +C\|\p_x^{\a+\a'}(\I-\P)g^\v\|^2_{L^2_{x,v}} \\[4pt]
\leq &\ \frac{C_0}{2^8\v^2}\|\p_x^\a\p_v^\b(\I-\P)g^\v\|^2_{L^2_{x,v}} + C\big(\D_{k,F}(t)+\D_{k,K,1}(t)\big).
\end{aligned}
\end{equation*}
Therefore, we have, for $|\a'|=|\b'|=1$,
\begin{equation}\label{B23}
\begin{aligned}
|B_{23}|\leq &\ \frac{C_0}{2^6\v^2} \|\p_x^\a\p_v^\b(\I-\P)g^\v\|^2_{L^2_{x,v}} + \frac{C}{\v^2}\|\p_x^{\a+\a'}\p_v^{\b-\b'}(\I-\P)g^\v\|^2_{L^2_{x,v}}+C\Big(\D_{k,F}(t)+\D_{k,K,1}(t)\Big),
\end{aligned}
\end{equation}
where $\D_{mi,K,1}(t)$, $\D_{mi,F}(t)$ are defined in \eqref{part-energy-functionals}.

By using the Micro-Macro decomposition \eqref{MM} and the similar estimate for $B_{122}$, $B_{24}$ can be bounded by
\begin{equation}\label{B24}
\begin{aligned}
|B_{24}|\leq \, &\frac{1}{2\v}\Big| \Big\l\p_x^\a\p_v^\b\Big\{\big[(\I-\P)g^\v \big]v\c\n_x\phi^\v\Big\},\p_x^\a\p_v^\b(\I-\P)g^\v \Big\r_{x,v}\Big|\\[4pt]
&+\frac{1}{2\v}\Big| \Big\l\p_x^\a\p_v^\b\big[(\P g^\v) v\c\n_x\phi^\v\big], \p_x^\a\p_v^\b(\I-\P)g^\v \Big\r_{x,v}\Big|\\[4pt]
&+\frac{1}{2\v}\Big|\Big\l\p_x^\a\p_v^\b\Big\{\big[(\I-\P) g^\v\big] v\c\n_x\phi^\v\Big\},\p_x^\a\p_v^\b(\I-\P)g^\v \Big\r_{x,v}\Big|\\[4pt]
&+\frac{1}{2\v}\Big| \Big\l\p_x^\a\p_v^\b\big[(\P g^\v) v\c\n_x\phi^\v\big],\p_x^\a\p_v^\b(\I-\P)g^\v \Big\r_{x,v}\Big|\\[4pt]
\lesssim \, &\E^{\frac{1}{2}}_k(t)\D_k(t).
\end{aligned}
\end{equation}
In addition, the similar estimate of $B_{13}$ yields
\begin{equation}\label{B25}
\begin{aligned}
|B_{25}|\lesssim \E^{\frac{1}{2}}_k(t)\D_k(t).
\end{aligned}
\end{equation}

Note that there is a term $\frac{1}{\v^2}\|\p_x^\a\p_v^{\b-\b'}(\I-\P)g^\v\|^2_{L^2_{x,v}}$ in \eqref{B21-B22} and \eqref{B23}, which is still not well-controlled. However, observing that the orders of $v$-derivatives in this term is $|\b|-1$, we can employ an induction over $|\b|$ and then collect the estimates \eqref{Estimate-mix-partial-I-P-g-1}, \eqref{B21-B22}, \eqref{B23}, \eqref{B24}, \eqref{B25} to find that there exist constants $C_2,\,\tilde{C}_2 >0 $ such that
\begin{equation*}
\begin{aligned}
\frac{1}{2}\frac{\d}{\d t}\E_{k,K,2}(t) + C_2\D_{k,K,2}(t)-\tilde{C}_2\big( \D_{k,K,1}(t)+\D_{k,F}(t)\big)\lesssim  \E^{\frac{1}{2}}_k(t)\D_k(t).
\end{aligned}
\end{equation*}
This completes the proof of the lemma.
\end{proof}


\subsection{Energy estimate for the macroscopic part}
\label{subsec:remainder_fluid}

In this subsection, we need to show the energy dissipation rate of the macroscopic parts $\frac{1}{\v} \| \n_x b^\v\|^2_{L^2_x}+\|\n_x a^\v\|^2_{L^2_x}$. Inspired by \cite{CJADRJMA11}, we propose the following auxiliary hyperbolic-parabolic coupled system of $a^\v$ and $b^\v$: for $1\leq i,j\leq 3$,
\begin{equation}\label{Pg-a-b}
\left\{
\begin{aligned}
&\p_t a^\v+\frac{1}{\v}\up{div}_x b^\v=0,\\
&\p_t b_i^\v+\frac{1}{\v}\p_{x_i} a^\v +\frac{1}{\v}\p_{x_i}\phi^\v+\frac{1}{\v^2} b_i^\v+\frac{1}{\v}a^\v\p_{x_i}\phi^\v+\frac{1}{\v}\sum_{k=1}^3\p_{x_k}\Gamma_{ik}[(\I-\P)g^\v]= 0,\\
&\frac{1}{\v}(\p_{x_i}b_j^\v+\p_{x_j}b_i^\v)+\frac{1}{\v}(b_j^\v\p_{x_i}\phi^\v+b_i^\v\p_{x_j}\phi^\v) =- \p_t \Gamma_{ij}[(\I-\P)g^\v]-\frac{2}{\v^2}\Gamma_{ij}[(\I-\P)g^\v]-\frac{1}{\v}\Gamma_{ij}\big[v\c\n_x(\I-\P)g^\v\big],\\
\end{aligned}
\right.
\end{equation}
where $a^\v $ and $b^\v$ are defined as in \eqref{ab}, and $\Gamma_{ij}$ are given by
\begin{equation}\label{R4i-R5i-Gamma-l}
\begin{aligned}
\Gamma_{ij}[g^\v]=&\ \int g^\v(v_i v_j-1)\M \,\d v.
\end{aligned}
\end{equation}

\begin{lemma}\label{Estiamte-Mi-fluid-part}
For any integer $k\geq3$, let $(g^\v,\n_x\phi^\v)$ be the solution to the VPFP system \eqref{The-VPFP-system}, there exist constants $C_3,\,\tilde{C}_3 > 0$ independent of $\v$ and $t$ such that, for any $t \geq 0$,
\begin{equation}\label{last-step}
\begin{aligned}
\frac{1}{2}\frac{\d}{\d t}\E_{k,F}(t)+C_3\D_{k,F}(t)-\tilde{C}_3 \D_{k,K,1}(t)\lesssim \E^{\frac{1}{2}}_k(t)\D_k(t),
\end{aligned}
\end{equation}
where the energy and dissipation functionals $\E_{k,F}(t)$, $\D_{k,F}(t)$, $\D_{k,K,1}(t)$, $\E_k(t)$ and $\D_k(t)$ are defined in \eqref{part-energy-functionals}, and \eqref{energy-dissipation-functional}, respectively.
\end{lemma}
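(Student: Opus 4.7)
The strategy is to carry out energy estimates on the auxiliary hyperbolic--parabolic system \eqref{Pg-a-b}, which is precisely the moment hierarchy of the VPFP system, and to complement them with Kawashima-type compensators tailor-made to produce the missing dissipation for $\n_x b^\v$ and $\n_x a^\v$. For any multi-index $|\a|\leq k-1$, I first apply $\p_x^\a$ to the first two equations of \eqref{Pg-a-b} and pair with $\p_x^\a a^\v$ and $\p_x^\a b^\v$, respectively. The singular cross terms $\frac{1}{\v}\l\p_x^\a\up{div}_x b^\v,\p_x^\a a^\v\r_x$ and $\frac{1}{\v}\l\p_x^\a\n_x a^\v,\p_x^\a b^\v\r_x$ cancel after integration by parts, leaving $\frac{1}{2}\frac{\d}{\d t}\|(\p_x^\a a^\v,\p_x^\a b^\v)\|_{L^2_x}^2+\frac{1}{\v^2}\|\p_x^\a b^\v\|_{L^2_x}^2$ plus nonlinear and micro error terms. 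The electrostatic coupling $\frac{1}{\v}\p_{x_i}\phi^\v$ in the $b^\v$-equation converts to $\frac{1}{2}\frac{\d}{\d t}\|\p_x^\a\n_x\phi^\v\|_{L^2_x}^2$ exactly as in the treatment of $B_{11}$ in Lemma~\ref{Estimate-kinetic-Mi-1}, using the Poisson relation together with the continuity equation. This accounts for the first block $\|(a^\v,b^\v,\n_x\phi^\v)\|^2_{H^{k-1}_x}$ of $\E_{k,F}$.

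To uncover dissipation for $\n_x b^\v$, I exploit the third identity of \eqref{Pg-a-b} as a Kawashima compensator. Differentiating
\[
2\sum_{i,j=1}^{3}\l\p_x^\a(\p_{x_i}b_j^\v+\p_{x_j}b_i^\v),\p_x^\a(\I-\P)g^\v(v_iv_j-1)\M\r_{x,v}
\]
in $t$ and substituting $\p_t\Gamma_{ij}[(\I-\P)g^\v]$ from that equation produces at leading order $-\frac{2}{\v}\|\p_x^\a(\p_{x_i}b_j^\v+\p_{x_j}b_i^\v)\|_{L^2_x}^2$. Summing over $i,j$ and invoking the identity $\int\sum_{i,j}|\p_i b_j+\p_j b_i|^2\d x=2\|\n_x b^\v\|_{L^2_x}^2+2\|\up{div}_x b^\v\|_{L^2_x}^2$ (by double integration by parts, with no boundary contribution on $\T^3$), and then summing over $|\a|\leq k-1$, yields the full $\frac{1}{\v}\|(\n_x b^\v,\up{div}_x b^\v)\|^2_{H^{k-1}_x}$ dissipation. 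The inhomogeneous pieces $\frac{2}{\v^2}\Gamma_{ij}[(\I-\P)g^\v]$ and $\frac{1}{\v}\Gamma_{ij}[v\c\n_x(\I-\P)g^\v]$, together with the other half $\l\p_x^\a\p_t(\p_i b_j+\p_j b_i),\p_x^\a\Gamma_{ij}\r$ of the time derivative (handled by substituting $\p_t b^\v$ from \eqref{Pg-a-b}$_2$), are split via Young's inequality: a small share is absorbed into the newly produced $\frac{1}{\v}\|\n_x b^\v\|^2$, while the remainder is bounded by a multiple of $\D_{k,K,1}(t)$. This is exactly the origin of the $-\tilde C_3\D_{k,K,1}(t)$ term on the left-hand side of \eqref{last-step}.

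For the $\n_x a^\v$ dissipation I use the second compensator $\v\l\p_x^{\a+\a'}a^\v,\p_x^\a b^\v\r_x$ (with $|\a'|=1$ and implicit sum over $\a'$). Its time derivative, after inserting $\p_t a^\v=-\frac{1}{\v}\up{div}_x b^\v$ and $\p_t b^\v$ from \eqref{Pg-a-b}$_2$, produces the key contribution $-\|\p_x^\a\n_x a^\v\|_{L^2_x}^2$ coming from the $\frac{1}{\v}\n_x a^\v$ term in the $b^\v$-equation -- the weight $\v$ of the compensator is designed precisely to cancel the $\frac{1}{\v}$ singularity. Summed over $|\a|\leq k-1$ this gives the missing $\|\n_x a^\v\|^2_{H^{k-1}_x}$ dissipation. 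The additional cross term with $\frac{1}{\v}\n_x\phi^\v$ is controlled via integration by parts and the Poisson equation $-\Delta_x\phi^\v=a^\v$, which, together with \eqref{The-Poinccare-inequality}, also delivers the $\|\n_x\phi^\v\|^2_{H^k_x}$ contribution to $\D_{k,F}(t)$. The delicate cross term $\frac{1}{\v}\l\p_x^\a\n_x a^\v,\p_x^\a b^\v\r_x$ arising from the $\frac{1}{\v^2}b^\v$ damping in \eqref{Pg-a-b}$_2$ is absorbed through Young's inequality into a small multiple of $\|\n_x a^\v\|^2_{H^{k-1}_x}$ plus a multiple of $\D_{k,K,1}(t)$.

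The main obstacles are twofold. First, because $\E_{k,F}(t)$ contains indefinite cross terms whose prefactors are either $\v$ or the micro part $(\I-\P)g^\v$, one must verify that $\E_{k,F}(t)\sim\|(a^\v,b^\v,\n_x\phi^\v)\|^2_{H^{k-1}_x}$; this follows from $\v\in(0,1]$, the smallness of $\E_k^{1/2}(t)$, Young's inequality and the embedding $H^2\hookrightarrow L^\infty$. Second, the numerous nonlinear terms produced by $\frac{1}{\v}a^\v\n_x\phi^\v$ and $\frac{1}{\v}(b_i^\v\p_{x_j}\phi^\v+b_j^\v\p_{x_i}\phi^\v)$, together with the micro sources produced when commuting $\p_x^\a$ with \eqref{Pg-a-b}, must be dominated by $\E_k^{1/2}(t)\D_k(t)$. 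These are treated in the style of $B_{12}$ and $B_{13}$ in Lemma~\ref{Estimate-kinetic-Mi-1}, with repeated use of $H^2\hookrightarrow L^\infty$ and the Poincar\'e inequality \eqref{The-Poinccare-inequality}; in every such product the singular factor $\frac{1}{\v}$ is balanced either by a factor inside $\D_k(t)$ or by the microscopic dissipation $\D_{k,K,1}(t)$. Collecting all contributions and choosing the compensator weights sufficiently small yields \eqref{last-step}.
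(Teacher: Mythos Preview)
Your proposal is correct and follows essentially the same approach as the paper's proof: you identify the three building blocks exactly as the paper does, namely the basic $L^2$ energy identity for $(a^\v,b^\v,\n_x\phi^\v)$ obtained from $\eqref{Pg-a-b}_{1,2}$, the Kawashima-type compensator built on $\Gamma_{ij}[(\I-\P)g^\v]$ via $\eqref{Pg-a-b}_3$ to generate $\frac{1}{\v}\|(\n_x b^\v,\up{div}_x b^\v)\|^2_{H^{k-1}_x}$, and the cross term $\v\l\p_x^{\a+\a'}a^\v,\p_x^\a b^\v\r_x$ to recover $\|\n_x a^\v\|^2_{H^{k-1}_x}$ together with $\|\n_x\phi^\v\|^2_{H^k_x}$ through the Poisson relation. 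The only cosmetic difference is that in the paper the compensator coefficients are already fixed in the definition of $\E_{k,F}$, so no ``choosing weights sufficiently small'' step is needed at this stage; the balancing of constants is deferred to the proof of Proposition~\ref{total_energy_dissipation}.
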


\begin{proof}
By applying the derivative operator $\p_x^\a$ with $0\leq|\a|\leq k-1$ to $\eqref{Pg-a-b}_2$, multiplying with $\p_x^\a b^\v$, and then integrating over $x$, we have, for $|\a'|=1$,
\begin{multline}\label{Energy of b}
\frac{1}{2}\frac{\d}{\d t}\|\p_x^\a b^\v\|^2_{L^2_x} +\frac{1}{\v^2}\|\p_x^\a b^\v\|_{L^2_x}^2+ \underbrace{\frac{1}{\v}\l\p_x^{\a+\a'} a^\v,\p_x^\a b^\v\r_x}_{B_{31}} +\underbrace{\frac{1}{\v}\l\p_x^{\a+\a'}\phi,\p_x^\a b^\v\r_x}_{B_{32}}\\
+\underbrace{\frac{1}{\v}\sum_{i=1}^3\l\p_x^\a(a^\v\p_{x_i}\phi^\v),\p_x^\a b_i^\v\r_v}_{B_{33}}
+\underbrace{\frac{1}{\v}\sum_{i,j=1}^3\l\p_x^{\a}\Gamma_{ij}[\p_{x_j}(\I-\P)g^\v],\p_x^\a b_i^\v\r_x}_{B_{34}}=0.
\end{multline}

For $B_{31}$, by substituting $\eqref{Pg-a-b}_1$, we have, for $|\a'|=1$,
\begin{equation}\label{B31}
\begin{aligned}
B_{31}= \frac{1}{\v}\l\p_x^{\a+\a'}a^\v,\p_x^\a b^\v\r_x = -\frac{1}{\v}\l\p_x^\a a^\v,\p_x^\a\up{div}_xb\r_x=\l\p_x^\a a^\v,\p_x^\a\p_t a^\v\r_x=\frac{1}{2}\frac{\d}{\d t}\|\p_x^\a a^\v\|^2_{L^2_x}.
\end{aligned}
\end{equation}

For $B_{32}$, by substituting $\eqref{Pg-a-b}_1$ and $\eqref{The-VPFP-system}_2$, we find, for $|\a'|=1$,
\begin{equation}\label{B32}
\begin{aligned}
B_{32}=\frac{1}{\v}\l\p_x^{\a+\a'}\phi^\v,\p_x^\a b^\v\r_x = -\frac{1}{\v}\l\p_x^\a \phi^\v,\p_x^\a\up{div}_x b^\v\r_x
=\l\p_x^\a a^\v,\p_x^\a\p_t a^\v\r_x
=&-\l\p_x^\a\phi^\v,\p_t\p_x^\a\Delta_x\phi^\v\r_x\\
=&\frac{1}{2}\frac{\d}{\d t}\|\p_x^{\a+\a'}\phi^\v\||_{L^2_x}^2.
\end{aligned}
\end{equation}

For $B_{33}$, if $|\a|=0$, we have
\begin{equation}\label{B33-0}
\begin{aligned}
|B_{33}|=\frac{1}{\v}\big|\l a^\v,b^\v\c\n_x\phi^\v\r_x\big|
\leq&\frac{1}{\v}\|\n_x\phi^\v\|_{L^\infty_x}\|a^\v\|_{L^2_x}\|b^\v\|_{L^2_x}\\
\lesssim& \frac{1}{\v}\|\n_x\phi^\v\|_{H^2_x}\|\n_x a^\v\|_{L^2_x}\|b^\v\|_{L^2_x}\\
\lesssim&\E^{\frac{1}{2}}_k\D_k(t)),
\end{aligned}
\end{equation}
if $|\a|\geq 1$, we have
\begin{equation}\label{B33-1}
\begin{aligned}
|B_{33}|\lesssim&\frac{1}{\v}\big|\l\p_x^\a a^\v,\n_x\phi^\v\c\p_x^\a b^\v\r_x\big|+\frac{1}{\v}\sum_{0\leq|\a'|\leq|\a|-1}\big|\l\p_x^{\a'}a^\v,\p_x^{\a-\a'}\phi^\v\c\p_x^\a b^\v\r_x\big|\\
\lesssim&\frac{1}{\v}\|\p_x^\a a^\v\|_{L^2_x}\|\n_x\phi^\v\|_{L^\infty_x}\|\p_x^\a b^\v\|_{L^2_x}+\frac{1}{\v}\sum_{0\leq|\a'|\leq|\a|-1}\|\p_x^{\a'}a^\v\|_{L^2_x}\|\p_x^{\a-\a'}\phi^\v\|_{L^4_x}\|\p_x^\a b^\v\|_{L^4_x}\\
\lesssim&\frac{1}{\v}\|\n_x\phi^\v\|_{H^2_x}\|\n_x a^\v\|_{H^{k-1}_x}\|b^\v\|_{H^k_x}+\frac{1}{\v}\|\n_x\phi^\v\|_{H^k_x}\|\n_x a^\v\|_{H^{k-1}_x}\| b^\v\|_{H^k_x}\\
\lesssim&\E^{\frac{1}{2}}_k(t)\D_k(t),
\end{aligned}
\end{equation}
where the H$\ddot{\up{o}}$lder inequality and the Poincar$\acute{\up{e}}$ inequality in \eqref{The-Poinccare-inequality} are used.

For $B_{34}$, considering the smallness of $\v$, we obtain
\begin{equation}\label{B34}
    \begin{aligned}
        |B_{34}|\leq &\, \frac{1}{\v}\sum_{i,j=1}\Big|\l\p_x^\a\Gamma_{ij}\big[\p_{x_j}(\I-\P)g^\v\big],\p_x^\a b_i^\v\r_x\Big|\\
        \leq&\, \frac{1}{\v}\sum_{i,j=1}\|\p_x^\a\Gamma_{ij}\big[\p_{x_j}(\I-\P)g^\v\big]\|_{L^2_{x,v}}\|\p_x^\a b_i^\v\|_{L^2_x}\\
        \leq&\, \frac{2^8}{\v}\|(\I-\P)g^\v\|_{\mathcal{H}^k_x\mathcal{L}^2_v} \|\p_x^\a b^\v\|_{L^2_x}\\
        \leq&\, \frac{2^{16}}{\v^2}\|(\I-\P)g^\v\|_{\mathcal{H}^k_x\mathcal{L}^2_v}^2+\frac{1}{\v^2}\|\p_x^\a b^\v\|_{L^2_x}^2\\
        \leq&\, 2^{16}\D_{k,K,1}(t)+\frac{1}{\v^2}\|\p_x^\a b^\v\|_{L^2_x}^2.
    \end{aligned}
\end{equation}

Therefore, by substituting the estimates of $B_{31}$, $B_{32}$, $B_{33}$, and $B_{34}$ in \eqref{B31}-\eqref{B34} into \eqref{Energy of b}, we find,
\begin{equation}\label{Energy-of-a-and-b}
\begin{aligned}
\frac{1}{2}\frac{\d}{\d t}\|\big(\p_x^\a a^\v,\p_x^\a b^\v,\p_x^\a\n_x\phi^\v\big)\|^2_{L^2_x}-2^{16}\D_{k,K,1}(t)\lesssim \E^{\frac{1}{2}}_k(t)\D_k(t).
\end{aligned}
\end{equation}

On the other hand, it follows from $\eqref{Pg-a-b}_3$ that
\begin{multline}\label{partial-b-1}
\frac{1}{\v}\sum_{i,j=1}^3\|\p_x^\a(\p_{x_i} b_j^\v+\p_{x_j}b_i^\v)\|^2_{L^2_x} = -\frac{\d}{\d t}\sum_{i,j=1}^3\int\p_x^\a(\p_{x_i} b_j^\v+\p_{x_j}b_i^\v)\p_x^\a\Gamma_{ij}[(\I-\P)g^\v] \,\d x\\
 +\underbrace{\sum_{i,j=1}^3\l\p_x^\a (\p_{x_j}\p_t b_i^\v+\p_{x_i}\p_t b_j^\v),\p_x^\a\Gamma_{ij}[(\I-\P)g^\v]\r_x}_{B_{41}}\underbrace{-\frac{1}{\v}\sum_{i,j=1}^3\l\p_x^\a(\p_{x_i}b_j^\v+\p_{x_j}b_i^\v),\p_x^\a(b_i^\v\p_{x_j}\phi^\v+b_j\p_{x_i}\phi^\v)\r_x}_{B_{42}}\\
\underbrace{-\sum_{i,j=1}^3\frac{2}{\v^2}\l\p_x^\a(\p_{x_j}b_i^\v+\p_{x_i}b_j^\v),\p_x^\a\Gamma_{ij}[(\I-\P)g^\v]\r_x}_{B_{43}}\underbrace{-\frac{1}{\v}\sum_{i,j=1}^3\l\p_x^\a(\p_{x_j}b_i^\v+\p_{x_i}b_j^\v),\p_x^\a\Gamma_{ij}\big[v\c\n_x(\I-\P)g^\v\big]\r_x}_{B_{44}}
\end{multline}
for $0 \leq |\a| \leq k-1$.

For $B_{41}$, replacing $\p_t b_i^\v$ by $\eqref{Pg-a-b}_2$, it can be further divided into the following parts:
\begin{equation}\label{B51-0}
\begin{aligned}
B_{41}=&\ \underbrace{\frac{2}{\v}\sum_{i,j=1}^3 \l\p_x^\a \p_{x_i} a^\v,\p_x^\a\p_{x_j} \Gamma_{ij}[(\I-\P)g^\v]\r_x}_{B_{411}} +\underbrace{\frac{2}{\v}\sum_{i,j=1}^3\l\p_x^\a\p_{x_i}\phi^\v,\p_x^\a\p_{x_j}\Gamma_{ij}[(\I-\P)g^\v]\r_x}_{B_{412}}\\
&+\underbrace{\frac{2}{\v}\sum_{i,j=1}^3\l\p_x^\a b_i^\v,\p_x^\a\p_{x_j}\Gamma_{ij}[(\I-\P)g^\v]\r_x}_{B_{413}}+\underbrace{\frac{2}{\v^2}\sum_{i,j=1}^3\l\p_x^\a(a^\v\p_{x_i}\phi^\v),\p_x^\a\p_{x_j}\Gamma_{ij}[(\I-\P)g^\v]\r_x}_{B_{414}}\\
&\ +\underbrace{\frac{2}{\v}\sum_{i,j,k=1}^3\l\p_x^\a \p_{x_k}(\I-\P)g^\v(v_k v_i-1),\p_x^\a\p_{x_j}\Gamma_{ij}[(\I-\P)g^\v]\M\r_{x,v}}_{B_{415}},
\end{aligned}
\end{equation}
where, in the last term above, we use the fact that $\Gamma_{ij}[\p_{x_j}(\I-\P)g^\v]=\l \p_{x_j}(\I-\P)g^\v,\M \r_v$.\\
For $B_{411}$, we have
\begin{equation}\label{B411}
\begin{aligned}
|B_{411}|\leq & \, \frac{2}{\v}\sum_{i,j=1}^3\|\p_x^\a \p_{x_i} a^\v\|_{L^2_x}\|\p_x^\a\p_{x_j} \Gamma_{ij}[(\I-\P)g^\v]\|_{L^2_x} \\
\leq&\, \frac{2^{16}}{\v^2}\|(\I-\P)g^\v\|_{\mathcal{H}^k_x\mathcal{L}^2_v}^2+\frac{1}{2^8}\|\p_x^{\a}\n_x a^\v\|_{L^2_x}^2\\
\leq& \, 2^{16}\D_{k,K,1}(t)+\frac{1}{2^8}\|\p_x^{\a}\n_x a^\v\|_{L^2_x}^2,
\end{aligned}
\end{equation}
where the H$\ddot{\up{o}}$lder inequality in $v$ and the Young inequality are applied.\\
For $B_{412}$, we have
\begin{equation}\label{B412}
\begin{aligned}
|B_{412}|\leq& \, \frac{2}{\v}\sum_{i,j=1}^3\|\p_x^\a \p_{x_i} \phi^\v\|_{L^2_x}\|\p_x^\a\p_{x_j} \Gamma_{ij}[(\I-\P)g^\v]\|_{L^2_x}\\
\leq& \, \frac{2^{16}}{\v^2}\|(\I-\P)g^\v\|_{\mathcal{H}^k_x\mathcal{L}^2_v}^2+\frac{1}{2^8}\|\p_x^{\a}\n_x\phi^\v\|_{L^2_x}^2\\
\leq& \, 2^{16}\D_{k,K,1}(t)+\frac{1}{2^8}\|\p_x^{\a}\n_x \phi^\v\|_{L^2_x}^2.
\end{aligned}
\end{equation}
For $B_{413}$, we have 
\begin{equation}\label{B413}
\begin{aligned}
|B_{413}|\leq& \, \frac{2}{\v}\sum_{i,j=1}^3\|\p_x^\a \p_{x_i}b_j^\v\|_{L^2_x}\|\p_x^\a\p_{x_j} \Gamma_{ij}[(\I-\P)g^\v]\|_{L^2_x}\\
\leq& \, \frac{2^{16}}{\v^2}\|(\I-\P)g^\v\|_{\mathcal{H}^k_x\mathcal{L}^2_v}^2+\frac{2^{16}}{\v^2}\|\p_x^{\a}\n_x b^\v\|_{L^2_x}^2\\
\leq& \, 2^{16}\D_{k,K,1}(t).
\end{aligned}
\end{equation}
For $B_{414}$, we have
\begin{equation}\label{B414}
\begin{aligned}
|B_{414}|\lesssim&\frac{1}{\v}\|a^\v\|_{L^\infty_x}\|\p_x^\a\n_x\phi^\v\|_{L^2_x}\|\p_x^\a\n_x(\I-\P)g^\v\|_{L^2_{x,v}}+\frac{1}{\v}\|\p_x^\a a^\v\|_{L^2_x}\|\n_x\phi^\v\|_{L^\infty_x}\|\p_x^\a\n_x(\I-\P)g^\v\|_{L^2_{x,v}}\\
&+\frac{1}{\v}\sum_{1\leq|\a'|\leq|\a|-1}\|\p_x^{\a'} a^\v\|_{L^4_x}\|\p_x^{\a-\a'}\n_x\phi^\v\|_{L^4_x}\|\p_x^\a\n_x(\I-\P)g^\v\|_{L^2_{x,v}}\\
\lesssim&\frac{1}{\v}\|a^\v\|_{H^2_x}\|\n_x\phi^\v\|_{H^{k-1}_x}\|(\I-\P)g^\v\|_{\mathcal{H}^k_x\mathcal{L}^2_v}\\ 
\lesssim&\E^{\frac{1}{2}}_k(t)\D_k(t).
\end{aligned}
\end{equation}
Hence, for $B_{41}$, by collecting the estimates \eqref{B411}-\eqref{B414}, we have
\begin{equation}\label{B41}
\begin{aligned}
|B_{41}|-\frac{1}{8}\|(\p_x^\a\n_x a^\v,\p_x^\a\n_x\phi^\v)\|_{L^2_x}^2-2^{18}\D_{k,K,1}(t) \, \lesssim \, \E^{\frac{1}{2}}_k(t)\D_k(t).
\end{aligned}
\end{equation}

For $B_{42}$, we have, for $|\a'|=1$
\begin{equation}\label{B42}
\begin{aligned}
|B_{42}|\lesssim& \, \frac{1}{\v}\sum_{0\leq|\tilde{\a}|\leq|\a|}\|\p_x^{\tilde{\a}+\a'} b^\v\|_{L^2_x}\|\p_x^{\tilde{a}} b^\v\|_{L^4_x}\|\p_x^{\a-\tilde{\a}+\a'}\phi^\v\|_{L^4_x}\\
\lesssim& \, \frac{1}{\v}\|\n_x\phi^\v\|_{H^k_x}\|b^\v\|_{H^k_x}^2\\
\lesssim& \, \E^{\frac{1}{2}}_k(t)\D_k(t).
\end{aligned}
\end{equation}

For $B_{43}$, we have
\begin{equation}\label{B43}
\begin{aligned}
|B_{43}|\leq& \, \frac{2^8}{\v^2}\|b^\v\|_{H^k_x}\|(\I-\P)g^\v\|_{\mathcal{H}^k_x\mathcal{L}^2_v} \\ 
\leq& \, \frac{2^8}{\v^2}\big(\|b^\v\|_{H^k_x}^2+\|(\I-\P)g^\v\|_{\mathcal{H}^k_x\mathcal{L}^2_v}^2\big)\\
\leq& \, 2^8\D_{k,K,1}(t).
\end{aligned}
\end{equation}

Foe $B_{44}$, by noticing that
\begin{multline*}
\|\n_x\Gamma_{ij}[(\I-\P)g^\v]\|_{L^2_x}=\|\int_{\R^3}\n_x(\I-\P)g^\v(v_i v_j-\delta_{ij})\M \d v\|_{L^2_x} \\[4pt]
\leq \|\n_x(\I-\P)g^\v\|_{L^2_{x,v}}\|(v_iv
_j-\delta_{ij})\M\|_{L^2_v} \leq 2^4\|\n_x(\I-\P)g^\v\|_{L^2_{x,v}},
\end{multline*}
we find 
\begin{equation}\label{B44}
\begin{aligned}
|B_{44}|\leq& \, \frac{2^8}{\v^2}\|b^\v\|_{H^k_x}\|\n_x(\I-\P)g^\v\|_{\mathcal{H}^{k-1}_x\mathcal{L}^2_v} \\
\leq& \, \frac{2^8}{\v^2}\big(\|b^\v\|_{H^k_x}^2+\|(\I-\P)g^\v\|_{\mathcal{H}^k_x\mathcal{L}^2_v}^2\big)\\
\leq& 2^8\D_{k,K,1}(t).
\end{aligned}
\end{equation}


Note that, for any $0\leq|\a|\leq k-1$ and $|\a'|=1$,
\begin{equation}\label{partial-b-0}
\frac{1}{\v}\sum_{i,j=1}^3\|\p_x^\a(\p_{x_i} b_j^\v+\p_{x_j} b_i^\v)\|^2_{L^2_x} = \frac{2}{\v}(\|\p_x^{\a+\a'}b^\v\|^2_{L^2_x} + \|\p_x^\a\up{div}_x b^\v\|^2_{L^2_x} ),
\end{equation}
then substituting the estimates on $B_{41}$, $B_{42}$, $B_{43}$, $B_{44}$ into \eqref{partial-b-1}, it yields that, for $|\a'|=1$,
\begin{multline}\label{B5}
\frac{d}{d t} \sum_{i,j=1}^3 \int\p_x^\a(\p_{x_i} b_j^\v + \p_{x_j} b_i^\v)\c\p_x^\a\Gamma_{ij}[(\I-\P)g^{\v} ]\, dx + \frac{2}{\v} (\|\p_x^{\a+\a'}b^\v\|^2_{L^2_x} + \|\p_x^\a\up{div}_x b^\v\|^2_{L^2_x})\\
-\frac{1}{8}\|(\p_x^{\a}\n_x a^\v,\p_x^{\a+\a'}\n_x\phi^\v)\|^2_{L^2_x}
-2^{19}\D_{k,K,1}(t)\lesssim \E^{\frac{1}{2}}_k(t)\D_k(t).
\end{multline}


On the other hand, considering the equation $\eqref{Pg-a-b}_2$, we have,
\begin{equation}\label{B6-0}
\begin{aligned}
\|\p_x^{\a}\n_x a^\v \|^2_{L^2_x}= &\sum_{i=1}^3\l\p_x^\a\p_{x_i} a,\p_x^\a\p_{x_i} a^\v\r_x\\
=&-\v\frac{\d}{\d t} \int \partial_x^{\a}\n_x a^\v\c\p_x^\a b^\v \d x + \underbrace{\v\sum_{i=1}^3\l\p_x^\a\p_{x_i} \p_t a^\v,\p_x^\a b_i^\v\r_x}_{B_{51}}\\
&\underbrace{-\sum_{i=1}^3\l\p_x^\a\p_{x_i}a^\v,\p_x^\a\p_{x_i}\phi^\v\r_x}_{B_{52}}
\underbrace{-\frac{1}{\v}\sum_{i=1}^3\l\p_x^\a\p_{x_i}a^\v,\p_x^\a b_i^\v\r_x}_{B_{53}} 
\underbrace{-\sum_{i=1}^3\l\p_x^\a\p_{x_i}a^\v,\p_x^\a(a^\v\p_{x_i}\phi^\v)\r_x}_{B_{54}}\\
&\underbrace{-\sum_{i,j=1}^3\l\p_x^\a\p_{x_i} a^\v,\p_x^\a\p_{x_i}(\I-\P)g^\v(v_iv_j-1)\M\r_{x,v}}_{B_{55}}
\end{aligned}
\end{equation}
for $0 \leq |\a| \leq 3$.\\
For $B_{51}$, by using $\eqref{Pg-a-b}_1$, we have
\begin{equation}\label{B51}
B_{51} = -\sum_{i=1}^3\l\p_x^\a \p_{x_i} \up{div}_x b^\v,\p_x^\a b_i^\v\r= \|\p_x^\a\up{div}_x b^\v\|^2_{L^2_x}.
\end{equation}
For $B_{52}$, by inserting $\eqref{The-VPFP-system}_2$, we have
\begin{equation}\label{B52}
B_{52} = -\sum_{i=1}^3\l\p_x^\a\p_{x_i}a^\v,\p_x^\a\p_{x_i}\phi^\v\r_x=\sum_{i=1}^3\l\p_x^\a\p_{x_i}\Delta_x\phi^\v,\p_x^\a\p_{x_i}\phi^\v\r_x=-\|\p_x^{\a+\a'}\n_x\phi^\v\|_{L^2_x}^2,
\end{equation}
for $|\a'|=1$.\\
For $B_{53}$ and $B_{55}$, we find
\begin{equation}\label{B53}
\begin{aligned}
|B_{53}|=& \ \Big| -\frac{1}{\v}\sum_{i=1}^3\l\p_x^\a\p_{x_i} a^\v, \p_x^\a b_i^\v\r_x \Big| \\
\leq& \ \frac{1}{8}\|\p_x^{\a}\n_x a^\v\|^2_{L^2_x} + \frac{2^4}{\v^2}\|\p_x^\a b^\v\|^2_{L^2_x} \\[5pt]
\leq& \ \frac{1}{8}\|\p_x^{\a}\n_x a^\v\|^2_{L^2_x} + 2^4\D_{k,K,1}(t),
\end{aligned}
\end{equation}
and
\begin{equation}\label{B55}
\begin{aligned}
|B_{55}| \leq&\ \frac{1}{8}\|\p_x^{\a+\a'} a^\v\|^2_{L^2_x} + \frac{2^8}{\v^2}\|(\I-\P)g^\v\|^2_{\mathcal{H}^k_x\mathcal{L}^2_v} \\[5pt]
\leq&\ \frac{1}{8}\|\p_x^{\a+\a'} a^\v\|^2_{L^2_x} + 2^8\D_{k,K,1}(t).
\end{aligned}
\end{equation}
For $B_{54}$, we have
\begin{equation}\label{B54}
\begin{aligned}
|B_{54}|\lesssim &\sum_{0\leq|\tilde{\a}|\leq|\a|}\|\p_x^{\tilde{\a}}\n_x a^\v\|_{L^2_x}\|\p_x^{\tilde{\a}} a^\v\|_{L^4_x}\|\p_x^{\a-\tilde{\a}}\n_x\phi^\v\|_{L^4_x}\\[5pt]
\lesssim& \|g^\v\|_{H^k_x L^2_v}\|\n_x\phi^\v\|_{H^k_x}\|\n_x a^\v\|_{H^{k-1}_x}\\[5pt]
\lesssim & \E^{\frac{1}{2}}_k(t)\D_k(t).
\end{aligned}
\end{equation}
Then, inserting the estimates of $B_{51}$-$B_{54}$, i.e., \eqref{B51}-\eqref{B54} into \eqref{B6-0}, we obtain, for $|\a'|=1$,
\begin{equation}\label{B6}
\frac{3}{4}\|(\p_x^{\a}\n_x a^\v,\p_x^{\a+\a'}\n_x\phi^\v)\|^2_{L^2_x} + \v\frac{\d}{\d t}\int\p_x^{\a}\n_x a\c\p_x^\a b^\v \, \d x-\|\p_x^\a\up{div}_x b^\v\|_{L^2_x}^2-2^9\D_{k,K,1}(t) \lesssim\E^{\frac{1}{2}}_k(t)\D_k(t).
\end{equation}

Finally, combining \eqref{Energy-of-a-and-b}, \eqref{B5} and \eqref{B6}, we have,
\begin{multline}\label{B5-B6}
\frac{\d}{\d t}\sum_{i,j=1}^3\int\p_x^\a(\p_{x_i} b_j^\v+\p_{x_j} b_i^\v)\c\p_x^\a\Gamma_{ij}[(\I-\P)g^\v] \,\d x + \v\frac{\d}{\d t}\int\p_x^{\a}\n_x a^\v\c\p_x^\a b^\v\,\d x\\
+\frac{1}{\v} \big(\|\p_x^{\a}\n_x b^\v\|^2_{L^2_x} + \|\p_x^\a\up{div}_x b^\v\|^2_{L^2_x} \big)
+ \frac{1}{2}\|(\p_x^{\a}\n_x a^\v,\p_x^{\a+\a'}\n_x\phi^\v)\|^2_{L^2_x}-2^{20}\D_{k,K,1}(t)
\lesssim\E^{\frac{1}{2}}_k(t)\D_k(t),
\end{multline}
for any $0\leq |\a|\leq k-1$ and $|\a'|=1$.
The proof of Lemma \ref{Estiamte-Mi-fluid-part} can be completed by summing up $0 \leq |\a| \leq k-1$ in \eqref{B5-B6}.

\end{proof}

\subsection{Proof of Proposition \ref{total_energy_dissipation}}
\label{subsec:proof_total_energy}

In this subsection, we present how to combine the Lemmas \ref{Estimate-kinetic-Mi-1}, \ref{Estiamte-Mi-kinetic-part-mixed-partial} and \ref{Estiamte-Mi-fluid-part} together to obtain the total energy estimate (Proposition \ref{total_energy_dissipation}).

\begin{proof}
We prove the total energy estimate \eqref{total_energy_estimate} in the following three steps:

\textbf{Step 1:} Choosing a constant $\tilde{\lambda}_1>0$ large enough such that
\begin{equation}\label{tilde-lambda-1}
C_1 \tilde{\lambda}_1 \geq \frac{\tilde{C}_3}{2},
\end{equation}
where $C_1,\,\tilde{C}_3$ are constants in \eqref{Step-one} of Lemma \ref{Estimate-kinetic-Mi-1} and \eqref{last-step} of Lemma \ref{Estiamte-Mi-fluid-part},  respectively.
Then, by applying \eqref{last-step} $ + \tilde{\lambda}_1\times$ \eqref{Step-one}, we can find that there exists a constant $C_4>0$ such that
\begin{equation}\label{First-time}
    \frac{1}{2}\frac{\d}{\d t}\big(\E_F(t)+\tilde{\lambda}_1\E_{k,K,1}(t)\big)+C_4\big(\D_{k,K,2}(t)+\D_{k,F}(t)\big) 
    \lesssim\E^{\frac{1}{2}}_k(t)\D_k(t).
\end{equation}

\textbf{Step 2:} Choosing a constant $\tilde{\lambda}_2>0$ large enough such that
\begin{equation}\label{tilde-lambda-2}
C_4 \tilde{\lambda}_2 \geq \frac{\tilde{C}_2}{2},
\end{equation}
where $\tilde{C}_2$ is the constant in \eqref{Step-two} of Lemma \ref{Estiamte-Mi-kinetic-part-mixed-partial} and $C_4$ is the constant in \eqref{First-time}.
Then, by applying \eqref{Step-one}$+\tilde{\lambda}_2\times$\eqref{First-time}, there exists a constant $ C_5 >0$ such that
\begin{equation}
\frac{1}{2}\frac{\d}{\d t}\big(\tilde{\lambda}_1\tilde{\lambda}_2\E_{k,K,1}(t)+\E_{k,K,2}(t)+\tilde{\lambda}_1\tilde{\lambda}_2\E_{k,F}(t)\big) + C_5\big(\sum_{i=1}^2\D_{k,K,i}(t)+\D_{k,F}(t)\big)
\lesssim \E^{\frac{1}{2}}(t)\D(t).
\end{equation}

\textbf{Step 3:} Denoting
\begin{equation}
\begin{aligned}
\E_k(t):=&\tilde{\lambda}_1\tilde{\lambda}_2\E_{k,K,1}(t)+\E_{k,K,2}(t)+\tilde{\lambda}_1\tilde{\lambda}_2\E_{k,F}(t),\\
\D_k(t):=&\sum_{i=1}^2 \D_{k,K,i}(t)+\D_F(t).
\end{aligned}
\end{equation}
and re-naming $\tilde{C}=C_5$, we finally obtain the total energy estimate \eqref{total_energy_estimate}:
\begin{equation}
\frac{1}{2}\frac{\d}{\d t}\E_k(t)+\tilde{C}\D_k(t) \,\lesssim\, \E^{\frac{1}{2}}_k(t)\D_k(t),
\end{equation}
where $\lambda_i, i =1,2,3$, are defined as follows:
\begin{equation}\label{lambda-constants}
\quad \lambda_1=\tilde{\lambda}_1\tilde{\lambda}_2,
\quad
\lambda_2=1,
\quad
\lambda_3=\tilde{\lambda}_1\tilde{\lambda}_2.
\end{equation}
\end{proof}







\subsection{Proof of Theorem \ref{Global-in-time-solution-of-VPFP}}
\label{subsec:proof-global}
In this subsection, we present the key part of the proof of Theorem \ref{Global-in-time-solution-of-VPFP}. In fact, The global well-posedness of $(g^\v, \nabla_x \phi^\v)$ for the VPFP system \eqref{The-VPFP-system}, as stated in Theorem \ref{Global-in-time-solution-of-VPFP}, directly follows from the local well-posedness result (Proposition \ref{Local-in-time}) combined with a standard continuity argument. The crucial ingredient for extending the local solution globally is the uniform energy estimate established in Proposition \ref{total_energy_dissipation}. For completeness, we refer the reader to \cite{JL22} for further details on this methodology.

Therefore, we only illustrate that the energy functional $\mathcal{E}_k(t)$ is continuous in $[0, T^*]$, where $T^*$ is given in Proposition \ref{Local-in-time}.
First, for any $0<\v\leq 1$, we have
\begin{equation*}
\frac{1}{C_4}\mathbb{E}_k(t) \leq \mathcal{E}_k(t) \leq C_4\mathbb{E}_k(t), \quad \frac{1}{C_4}\mathbb{D}_k(t) \leq \mathcal{D}_k(t) \leq C_4\mathbb{D}_k(t),\,
\end{equation*}
holds for any $t\in[0,T^*]$, where the constant $C_4 > 0$ is independent of $\v$ and $T^*$.

Furthermore, by considering the energy estimates \eqref{The-total-energy-of-local-in-time-solution}, \eqref{total_energy_estimate} and the assumption $\mathbb{E}(0)\leq\delta_0$ in Theorem \ref{Global-in-time-solution-of-VPFP} , we find, for any $[t_1,t_2]\subset [0,T^*]$ and $0<\v\leq 1$,
\begin{multline*}
\Big|\mathcal{E}_k(t_2)-\mathcal{E}_k(t_1)\Big| \lesssim  \int_{t_1}^{t_2}\E_k^{\frac{1}{2}}(t)\D_k(t) \,\d t 
\lesssim  \sup_{0\leq t\leq T^*}\E_k^{\frac{1}{2}}(t)\int_{t_1}^{t_2}\D_k(t) \,\d t\\[5pt]
\lesssim  \mathbb{E}^{\frac{1}{2}}(0)\int_{t_1}^{t_2}\D_k(t) \,\d t
\lesssim \sqrt{\delta_0}\int_{t_1}^{t_2}\D_k(t) \,\d t\to\,0,\quad \up{as}\quad t_1\to t_2,
\end{multline*}
which implies the continuity of $\E_k(t)$ in $t\in[0,T^*]$.

\section{Rigorous justification of the hydrodynamic limit (Theorem \ref{Limit-Fluid-equations})}
\label{sec:limit}

In this section, by following \cite{JNXCJZHJ18}, we will provide a rigorous justification of the limiting process from the scaled VPFP system \eqref{The-VPFP-system} to the DDP system \eqref{The-Drift-Diffusion-Possion-system} as $\v \to 0$, i.e., the proof of Theorem \ref{Limit-Fluid-equations}.

\subsection{Compactness from the uniform energy estimates}
\label{subsec:compactness}

By the uniform energy estimate \eqref{Uniform energy estimate} in Theorem \ref{Global-in-time-solution-of-VPFP}, there exists a constant $C > 0$, independent of $\v$, such that for any $0<\v\leq 1$ and $k\geq 3$,
\begin{equation}\label{Bound of g}
\begin{aligned}
\sup_{t\geq 0}\big(\|g^\v \|_{H^k_x L^2_v}^2+\|\n_x\phi^\v\|_{H^k_x}^2\big)\leq C,
\end{aligned}
\end{equation}
and
\begin{equation}\label{The-energy-dissipation-bound-of-g}
\begin{aligned}
\int_0^T\|(\I-\P_0)g^\v\|_{\mathcal{H}^k_{x,v}}^2 \,\d t \leq C\v^2,
\end{aligned}
\end{equation}
for any given $T>0$.

From \eqref{Bound of g}, we can find that there exist $g_0 \in L^\infty\big(0,+\infty;H^k_{x,v}\big)$ and $\n_x\phi_0\in L^\infty\big(0,+\infty;H^k_x\big)$ such that
\begin{equation}\label{Convergence-of-g-and-phi}
\begin{aligned}
g^\v(t,x,v) & \to g_0(t,x,v),\quad \up{weakly-$\star$ for $t\in[0,T]$, strongly in $H^{k-1}_x$, weakly in $H^k_v$},\\[5pt]
\n_x\phi^\v(t,x) & \to \n_x\phi_0(t,x) ,\quad \up{weakly-$\star$ for $t\in[0,T]$, strongly in $H^{k-1}_x$},\\
\end{aligned}
\end{equation}
as $\v \to 0$ for any given $T>0$. 

From \eqref{The-energy-dissipation-bound-of-g}, we have
\begin{equation}\label{Convergence of of I-P g}
(\I-\P_0)g^\v (t,x,v) \to 0, \quad \up{in}\quad L^2 \big(0,T;\mathcal{H}^k_{x,v}\big),
\end{equation}
as $ \v \to 0$ for any given $T>0$. 

Combining the convergence of \eqref{Convergence-of-g-and-phi} and \eqref{Convergence of of I-P g}, it yields that
\begin{equation}\label{I-P of g0}
(\I-\P_0)g_0(t,x,v) = 0,
\end{equation}
which implies the existence of $\rho_0\in L^\infty \big(0,T;H^k_x\big)$ such that
\begin{equation}\label{The form of g0}
g_0(t,x,v) = \rho_0(t,x)\M,
\end{equation}
for any given $T>0$. 
\subsection{Justification of the limiting process}
\label{subsec:justification}

Applying the convergence of $g^\v \to g_{0}$ in \eqref{Convergence-of-g-and-phi} and recalling \eqref{The-def-a}, we have
\begin{equation}\label{The convergence of rho-u-theta}
a^\v\to \rho_0, \quad \up{weakly-$\star$ for $t\in[0,T]$, strongly in $H^{k-1}_x$},
\end{equation}
as $\v \to 0$.

Next, multiplying \eqref{The-VPFP-system} by $\M$ and integrating over $v$, it leads to
\begin{equation}\label{The-local-conservation-laws}
\left\{
\begin{aligned}
&\p_t a^\v+\frac{1}{\v}\up{div}_x\l g^\v,v\M\r_v=0,\\[5pt]
&-\Delta_x\phi^\v=a^\v.
\end{aligned}
\right.
\end{equation}

Using $\eqref{The-local-conservation-laws}_1$, \eqref{The-energy-dissipation-bound-of-g} and \eqref{Bound of g}, for any given $T>0$, we have
\begin{equation}\label{Bounded-with-pa}
    \begin{aligned}
        \int_0^T\|\p_t a^\v\|_{H^{k-1}_x}^2\d t=&\frac{1}{\v^2}\int_0^T\|\up{div}\l (\I-\P_0)g^\v,v\M\r_v\|_{H^{k-1}_x}^2 \,\d t\\[5pt]
        \lesssim&\frac{1}{\v^2}\int_0^T\|(\I-\P_0)g^\v\|_{H^{k}_x L^2_v}^2 \,\d t\\[5pt]
        \lesssim&\,1,
    \end{aligned}
\end{equation}
and 
\begin{equation}\label{Bounded-with-a}
    \begin{aligned}
     \sup\limits_{t\in[0,T]}\|a^\v\|_{H^k_x}=&\sup\limits_{t\in[0,T]}\|\l g^\v,\M\r_v\|_{H^k_x}\leq\sup\limits_{t\in[0,T]}\| g^\v\|_{H^k_xL^2_v}\lesssim\,1.
    \end{aligned}
\end{equation}
Then, by noting the convergence \eqref{The convergence of rho-u-theta}, and the estimates \eqref{Bounded-with-pa}, \eqref{Bounded-with-a}, we can obtain by the Aubin-Lions Lemma that 
\begin{equation*}
    \rho_0\in L^\infty(0,T;H^k_x)\cap C([0,T];H^{k-1}_x),
\end{equation*}
such that
\begin{equation}\label{The-convergence-of-a}
a^\v  \to \rho_0 \quad \up{strongly in $C([0,T];H^{k-1}_x)$},
\end{equation}
as $\v\to 0$.

The similar argument can also be extended to $\Delta_x\phi^\v$. Using \eqref{The-local-conservation-laws} and \eqref{Bounded-with-a},  we have, for any given $T>0$,
\begin{equation}\label{Bounded-with-p-phi}
    \begin{aligned}
        \int_0^T\|\p_t\n_x^2\phi^\v\|_{H^{k-1}_x}^2\d t=\int_0^T\|\p_t\Delta_x\phi^\v\|_{H^{k-1}_x}^2\d t=&\int_0^T\|\p_t a^\v\|_{H^{k-1}_x}^2\d t
        \lesssim\,1,
    \end{aligned}
\end{equation}
and the Poincar$\acute{\up{e}}$ inequality for $\p_t\n_x\phi^\v$ that
\begin{equation}\label{Poincare-for-pt-phi}
\int_0^T\int_{\T^3}|\p_t\n_x\phi^\v|^2 \,\d x \,\d t \,\lesssim\,  \int_0^T\int_{\T^3}|\p_t\n_x^2\phi^\v|^2 \,\d x \,\d t.  
\end{equation}
Then, considering the convergence $\eqref{Convergence-of-g-and-phi}_2$ and estimates \eqref{Bound of g}, \eqref{Bounded-with-p-phi}, \eqref{Poincare-for-pt-phi}, we can obtain, from the Aubin-Lions Lemma, that
\begin{equation*}
    \n_x\phi_0\in L^\infty\big(0,T;H^k_x\big) \cap  C\big([0,T];H^k_x\big),
\end{equation*}
such that
\begin{equation}\label{The-convergence-of-nx-phi}
\n_x\phi^\v\to\n_x\phi_0, \quad \up{strongly in $C\big([0,T];H^{k-1}_x\big)$},
\end{equation}
as $\v\to 0$.

Furthermore, according to \eqref{The-formally-limits-2}, we have
\begin{equation}\label{Def-R}
    \begin{aligned}
      \frac{1}{\v} &\up{div}_x\l g^\v,v\M\r_v=-\Delta_x\rho_0+\up{div}\big[(\rho_0+1)\n_x\phi_0\big]
      \underbrace{-\up{div}_x \l\v\p_t g^\v,v\M\r_v}_{R_1}\underbrace{-\up{div}_x\l v\c\n_x(g^\v-g_0),v\M\r_v}_{R_2}\\[5pt]
      &\underbrace{-\up{div}_x\l v\c\n_x(\phi^\v-\phi_0),v\M\r_v}_{R_3}\underbrace{-\up{div}_x\l\n_x\phi^\v\c\n_v(g^\v\M)-\n_x\phi_0\c\n_v(g_0\M),v\r_v}_{R_4}.
    \end{aligned}
\end{equation}

From the energy estimate \eqref{Bound of g}, we find, for any test functions $\varphi(t,x)\in C^\infty_0([0,+\infty)\times\T^3)$,
\begin{equation}
    \begin{aligned}
        \Big|\int_0^{+\infty}\int_{\T^3}R_1(t,x)\varphi(t,x) \,\d x \,\d t \Big| = &\, \v\Big|\int_0^{+\infty}\int_{\T^3}\up{div}_x\l g^\v,v\M\r_v\p_t\varphi \,\d x \,\d t\Big|\\[5pt]
        \lesssim&\, \v\|\n_x g^\v\|_{L^\infty_t L^2_{x,v}}\|\p_t\varphi\|_{L^1_t L^2_x}\to\,0,
    \end{aligned}
\end{equation}
which implies that
\begin{equation}\label{R1}
 R_1= -\v\p_t\up{div}_x \l g^\v,v\M\r_v \rightharpoonup \,0, 
\end{equation}
as $\v\to 0$.

For $R_2$ and $R_3$, by using \eqref{Convergence-of-g-and-phi}, we have, for $k\geq 3$,
\begin{equation}\label{R2-R3}
\begin{aligned}
 \|R_2\|_{H^{k-3}_x}\lesssim \|g^\v-g_0\|_{H^{k-1}_{x}L^2_v}\to 0, \quad &\up{weakly-$\star$ for $t\geq 0$}\\[5pt]
 \|R_3\|_{H^{k-2}_x}\lesssim\|\n_x\phi^\v-\n_x\phi_0\|_{H^{k-1}_{x}L^2_v}\to 0, \quad \quad &\up{weakly-$\star$ for $t\geq 0$}
\end{aligned}
\end{equation}
as $\v \to 0$.

For $R_4$, we have
\begin{equation}\label{R4}
    \begin{aligned}
     \|R_4\|_{H^{k-3}_x}=&\|\up{div}_x \big\l\n_x\phi^\v g^\v-\n_x\phi_0 g_0,\M \big\r_v \|_{H^{k-1}_x}\\[5pt]
     \leq&\|\up{div}_x\l\n_x(\phi^\v-\phi_0)g^\v,\M\r_v\|_{H^{k-3}_x}+\|\up{div}_x\l\n_x\phi_0(g^\v-g_0),\M\r_v\|_{H^{k-3}_x}\\[5pt]
     \lesssim&\|\n_x(\phi^\v-\phi_0)\|_{H^{k-1}_x}\|g^\v\|_{H^{k-1}_x L^2_v}+\|\n_x\phi_0\|_{H^{k-1}_x}\|g^\v-g_0\|_{H^{k-1}_x L^2_v}\\[5pt]
     \to&\,0 \quad \up{weakly-$\star$ for $ 0 \leq t \leq T$,}
    \end{aligned}
\end{equation}
as $\v\to 0$.

For $\p_t a^\v$, by noting \eqref{The convergence of rho-u-theta} and \eqref{The-convergence-of-a}, we can follow the similar argument as in\eqref{R1} to obtain
\begin{equation}\label{pa}
    \begin{aligned}
     \p_t a^\v \rightharpoonup \p_t \rho_0,
    \end{aligned}
\end{equation}
as $\v\to 0$.

For $\Delta_x\phi^\v$ in $\eqref{The-local-conservation-laws}_2$, using $\eqref{Convergence-of-g-and-phi}_2$, we have
\begin{equation}\label{Delta-phi}
 -\Delta_x\phi^\v\to-\Delta_x\phi_0, \quad \up{weakly-$\star$ for $ 0 \leq t \leq T$, strongly in $H^{k-2}_x$,}
\end{equation}
as $\v\to 0$.

Therefore, we obtain, for any $k\geq 3$,
\begin{equation*}
\rho_0\in L^\infty\big(0,T;H^k_x\big) \cap C\big([0,T];H^{k-1}_x\big),
\quad \n_x\phi_0\in L^\infty\big(0,T;H^k_x\big)\cap C\big([0,T];H^k_x\big),    
\end{equation*}
for any given $T>0$ satisfying the DDP system 
\begin{equation*}
\left\{
\begin{aligned}
&\p_t\rho_0=\Delta_x\rho_0+\up{div}_x\big[(\rho_0+1)\n_x\phi_0\big],\\[4pt]
&-\Delta_x\phi_0=\rho_0,
\end{aligned}
\right.
\end{equation*}
with the initial conditions
\begin{equation*}
  \rho_0(0,x)=\rho_0^{in}(x), \quad \n_x\phi_0(0,x)=\n_x\phi^{in}_0(x),  
\end{equation*}
where the uniqueness can be further derived by the stability energy estimate in the higher-regularity spaces.

\subsection{Proof of Corollary \ref{Conergence}}
\label{subsec:proof_main_corollary}

In this subsection, we finally complete the proof of the convergence in Corollary \ref{Conergence} by using the embedding theorem.
For any given $T>0$ and $k\geq 4$, we have
\begin{equation}
\begin{aligned}
\int_0^T|f^\v(t,x,v)-(1+\rho_0(t,x))M|^2\d t\leq&\int_0^T\|(g^\v-\rho_0\M)\M\|_{L^\infty_xL^\infty_v}^2\d t\\[5pt]
\lesssim &\int_0^T\|(a^\v-\rho_0)\M+(\I-\P_0)g^\v\|_{L^\infty_xL^\infty_v}^2 \,\d t\\[5pt]
\lesssim &\int_0^T\|a^\v-\rho_0\|_{H^2_x}^2\d t+\int_0^T\|(\I-\P_0)g^\v\|_{H^2_x H^2_v}^2\d t\\[5pt]
\lesssim &\, T\|a^\v-\rho_0\|_{C([0,T];H^2_x)}^2+\v^2,\\[5pt]
\to&\,0, \quad \up{as $\v \to 0$},
\end{aligned}
\end{equation}
where the expansion \eqref{def:perturbation-form} is used in the first inequality, the decomposition \eqref{MM} is used in the second inequality, the Sobolev embedding $H^2\hookrightarrow L^\infty$ is applied in the third inequality.
\begin{equation}
\begin{aligned}
|\n_x\phi^\v(t,x)-\n_x\phi_0(t,x)|\leq&\sup\limits_{t\in[0,T]}\|\n_x\phi^\v-\n_x\phi_0\|_{L^\infty_x}\\[5pt]
\lesssim&\sup\limits_{t\in[0,T]}\|\n_x\phi^\v-\n_x\phi_0\|_{H^2_x}\\[5pt]
\to&\,0, \quad \up{as $\v \to 0$},
\end{aligned}    
\end{equation}
where the Sobolev embedding $H^2\hookrightarrow L^\infty$ is applied in the second inequality.
The proof of convergence is finally completed by the uniform boundedness as in the energy estimate \eqref{The-energy-dissipation-bound-of-g} as well as \eqref{The-convergence-of-a}, \eqref{The-convergence-of-nx-phi}.

\section*{Acknowledgment}
ZF was partially supported by the NSFC grant (No.12201140), and Guangzhou Basic and Applied Basic Research Foundation (No.2025A04J0029).
KQ acknowledges support from AMS-Simons Travel Award grant, and part of this work is completed and based upon work supported by the National Science Foundation under Grant No.~DMS-2424139, while KQ was in residence at the Simons Laufer Mathematical Sciences Institute in Berkeley, California, during the Fall 2025 semester.

\bibliographystyle{siam}
\bibliography{Kinetic-Fluid_Project}

\end{document}